\newtheorem{lemma}{Lemma}[section]
\newtheorem{prop}[lemma]{Proposition}
\newtheorem{thm}[lemma]{Theorem}
\newtheorem{cor}[lemma]{Corollary}
\theoremstyle{definition}
\newtheorem{defn}[lemma]{Definition}
\theoremstyle{remark}
\newtheorem{rmk}[lemma]{Remark}
\newcommand{\J}{\mathcal{J}}
\newcommand{\G}{\mathcal{G}}
\newcommand{\E}{\mathcal{E}}
\newcommand{\C}{\mathcal{C}}
\newcommand{\D}{\mathcal{D}}
\renewcommand{\H}{\mathcal{H}}
\newcommand{\A}{\mathcal{A}}
\renewcommand{\L}{\mathcal{L}}
\newcommand{\BbZ}{\mathbb{Z}}
\newcommand{\BbN}{\mathbb{N}}
\renewcommand{\epsilon}{\varepsilon}
\newcommand{\be}{\beta}
\numberwithin{equation}{section}
\numberwithin{table}{section}
\begin{document}

\title[Open maps: small and large holes]{Open maps: small and large holes \\ with unusual properties}
\author{Kevin G. Hare and Nikita Sidorov}
\address{}
\address{Department of Pure Mathematics \\
University of Waterloo \\
Waterloo, Ontario \\
Canada N2L 3G1}
\thanks{Research of K. G. Hare was supported by NSERC Grant RGPIN-2014-03154.}
\email{kghare@uwaterloo.ca}
\address{
School of Mathematics, The University of Manchester,
Oxford Road, Manchester M13 9PL, United Kingdom.}
\email{sidorov@manchester.ac.uk}

\date{\today}
\subjclass[2010]{Primary 28D05; Secondary 37B10.}
\keywords{Open dynamical system, subshift, unavoidable set.}
\begin{abstract}
Let $X$ be a two-sided subshift on a finite alphabet endowed
with a mixing probability measure which is positive
on all cylinders in $X$. We show that there exist arbitrarily
small finite overlapping union of shifted cylinders which intersect every orbit under the shift map.

We also show that for any proper subshift $Y$ of $X$ there exists a finite overlapping unions
of shifted cylinders such that its survivor set contains $Y$ (in particular, it can have entropy arbitrarily
close to the entropy of $X$). Both results may be seen as somewhat counter-intuitive.

Finally, we apply these results to a certain class of hyperbolic algebraic automorphisms of a torus.
\end{abstract}

\maketitle

\section{Introduction}
\label{sec:intro}

This paper is concerned with an area of dynamics which is usually referred to as ``maps with holes'', ``open maps'' or ``open dynamical systems''.
Let $X$ be a compact (or pre-compact) metric space and $f:X\to X$ be a map with positive topological entropy.
Let $H\subset X$ be an open set which we regard as a {\em hole}.

In this area of dynamics, we consider the set of points that do not fall into a ``hole'' under iterations of a map.
This is a well studied area with numerous papers.
The first paper in this area was \cite{PY1979}, where they considered a game of billiards with a hole somewhere in the table.
The rate at which a random billiard ball ``escaped'' was considered, and depended upon both the size and the location of the hole.
It is clear that if a hole is enlarged, then the rate of escape will not decrease, and often increase.
In \cite{AB2010, BY2011}, it was shown that the location could sometimes play a more significant role in the escape rate than the size
of the hole.
An important factor in the escape rate is the set of periodic points of the map that fall into the hole \cite{BJP2014}.
For a good history of these problems, see \cite{Demers2006, Demers2014, FP}.

The problem that we study, while related, is not exactly the same.
We are concerned with the set of points that will always avoid a hole,
    that is, that will never escape. The precise connection between the theory of escape rates and our results is
    explained in detail in Remark~\ref{rem:final} at the end of the paper. 

Formally, we denote by $\J(H)$ the set of all points in $X$ whose $f$-orbit does not intersect $H$ and call it
{\em the survivor set}. Clearly, a survivor set is $f$-invariant, and in a number of recent papers certain
dynamical properties of the map $f|_{\J(H)}$ have been studied -- see, e.g., \cite{Rafa} and
references therein.

It seems that a more immediate issue here is the ``size'' of a survivor set.
At first sight, it would seem plausible that if $H$ is ``large'', then $\J(H)$ is countable -- or even empty.
On the other hand, if it is ``small'', then one might expect the Hausdorff dimension of $\J(H)$ to be positive.

The starting point for this line of research has been the case when
$X=[0,1]$ and $T(x)=2x\bmod1$, the doubling map with $T(1)=1$.
Assume our hole to be connected, so we have $H=(a,b)\subsetneq(0,1)$.
We denote $\J(H)$ by $\J(a,b)$.

\begin{thm}[\cite{GS}]
\begin{enumerate}
\item
We always have $\dim_H \J(a,b)>0$ if
$$
b-a<\frac14\cdot\prod_{n=1}^{\infty}\left(1-2^{-2^n}\right) \approx0.175092,
$$
and this bound is sharp.
\item If $b-a>\frac12$, then $\J(a,b)=\{0,1\}$.\footnote{If we regard the doubling map
as the map $z\mapsto z^2$ on the unit circle $S^1$, then $\frac12$ needs to
be replaced with $\frac23$, in which case $\J(a,b)\subset\{1\}$.}
\end{enumerate}
\end{thm}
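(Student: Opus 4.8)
I would prove the two statements by rather different means: (2) by an elementary argument with binary expansions, and (1) --- especially the sharp constant --- by a renormalisation analysis of the symbolic dynamics. For (2), note first that $b-a>\tfrac12$ forces $a<\tfrac12<b$ (otherwise $b>1$ or $a<0$), and then $a<b-a$ and $b>\tfrac{a+1}{2}$ give $2a<b$ and $2b-1>a$. Hence $[0,a]$ and $[b,1]$ are forward traps for the survivor dynamics: $T[0,a]=[0,2a]$ meets $[0,a]\cup[b,1]$ only in $[0,a]$ (as $2a<b$), and $T[b,1]=[2b-1,1]$ meets it only in $[b,1]$ (as $2b-1>a$). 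Any $x\in\J(a,b)$ lies in $[0,1]\setminus(a,b)=[0,a]\cup[b,1]$, so it is trapped in one of these for all iterates; being trapped in $[0,a]\subseteq[0,\tfrac12)$ forces every binary digit of $x$ to vanish, so $x=0$, and being trapped in $[b,1]\subseteq(\tfrac12,1]$ forces $x=1$. As $\{0,1\}\subseteq\J(a,b)$ trivially, $\J(a,b)=\{0,1\}$. For $z\mapsto z^2$ on $S^1$ the threshold becomes $\tfrac23$: an orbit confined to the complementary arc, of length $<\tfrac13$, has its distance to the fixed point $z=1$ roughly doubled at each step and so must escape unless it equals $z=1$.

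For the positive-dimension half of (1) I would pass to symbolic dynamics: identifying $x$ with its binary expansion $\mathbf x\in\{0,1\}^{\mathbb N}$, one has, up to the countable set of dyadic endpoint ambiguities (irrelevant for Hausdorff dimension), that $x\in\J(a,b)$ precisely when every shift $\sigma^n\mathbf x$ is lexicographically $\preceq\mathbf a$ or $\succeq\mathbf b$, where $\mathbf a,\mathbf b$ are the expansions of $a,b$. Since $T$ is uniformly $\log 2$--expanding, it is enough to produce inside this symbolic survivor set a subshift of finite type of positive topological entropy $h$: its measure of maximal entropy pushes forward to a measure on $\J(a,b)$ giving generation-$n$ dyadic intervals mass $\asymp 2^{-nh/\log 2}$, so the mass distribution principle yields $\dim_H\J(a,b)\ge h/\log 2>0$. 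I would build such a subshift by a finite case analysis on the leading digits of $\mathbf a$ (equivalently, which dyadic interval contains $a$), exhibiting in each case two admissible finite words whose free concatenations never produce a forbidden shift; freeness alone gives positive entropy, and the minimal admissible word length blows up exactly as $b-a\uparrow c$, which is why $c=\tfrac14\prod_{n\ge1}(1-2^{-2^n})$ is the threshold.

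For sharpness I would exhibit an explicit extremal hole $H^{\ast}=(a^{\ast},b^{\ast})$ with $b^{\ast}-a^{\ast}=c$ (or a sequence of holes whose lengths increase to $c$) with $\J(H^{\ast})$ \emph{countable}, hence of Hausdorff dimension $0$. The endpoints $a^{\ast},b^{\ast}$ would have eventually self-similar binary expansions given by a substitution rule, and there would be a renormalisation operator $R$ on holes --- built from a first-return map of $T$ to a subinterval --- under which $H^{\ast}$ is a fixed point and for which ``$\J(\cdot)$ countable'' is invariant; iterating $R$ then presents $\J(H^{\ast})$ as a countable union of rescaled copies of one orbit closure, the successive scalings $2^{-2},2^{-4},2^{-8},\dots$ accumulating, via the length equation for the fixed point of $R$, into the infinite product (the factor $\tfrac14$ coming from the first renormalisation step).

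The technical core --- and the step I expect to be the main obstacle --- is the uniform-in-position lower bound together with the identification of $H^{\ast}$: both hinge on understanding, for a hole whose endpoints are prescribed by their binary expansions, exactly which finite words the hole forbids and how these forbidden words interact under the shift, and it is precisely this that the renormalisation bookkeeping is designed to control. One must also handle the two expansions of dyadic rationals with some care: although harmless for Hausdorff dimension, they make the passage from ``the $T$-orbit of $x$ avoids $(a,b)$'' to a clean symbolic condition somewhat delicate.
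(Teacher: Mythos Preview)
This theorem is not proved in the present paper: it is quoted from \cite{GS} (Glendinning--Sidorov) as background in the introduction, with no proof given here. There is therefore nothing in this paper to compare your proposal against.

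That said, a brief assessment of your sketch on its own merits: your argument for part~(2) is correct and complete. For part~(1), what you describe is indeed the shape of the argument in \cite{GS} --- passage to symbolic dynamics, a renormalisation operator on holes, and identification of an extremal hole whose survivor set is countable --- but what you have written is an outline rather than a proof. The genuine work lies in the step you yourself flag: constructing the renormalisation $R$, proving that the fixed-point hole has countable survivor set, and carrying out the case analysis that shows \emph{every} hole of length below the threshold admits a positive-entropy subshift in its survivor set. None of this is trivial, and your sketch does not supply it; you have correctly located where the difficulty sits, but not resolved it.
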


A similar claim holds for the $\beta$-transformation $x\mapsto\beta x\bmod1$
for $\beta\in(1,2)$ -- see \cite{Lyn}. Thus, in the one-dimensional setting one's
naive expectations prove to be spot on.

The situation is however very different for the baker's map. Namely, put $X=[0,1]^2$;
the baker's map $B:X\to X$ is the natural extension of the doubling map, conjugate to the
shift map on the set of bi-infinite sequences. We have
 \begin{equation*}
  B(x,y) = \begin{cases}
            (2x, \frac{y}{2}) &\text{ if } 0 \leq x < \frac{1}{2}, \\
            (2x-1, \frac{y+1}{2}) &\text{ if } \frac{1}{2} \leq x < 1.
           \end{cases}
 \end{equation*}

We will say that an open set $H$ is a {\em complete trap} for the baker's map
if $\J(H)$ does not contain any points except, possibly, an orbit on the boundary
of $X$. (In which case it is symbolically $\dots 0000\dots$, $\dots 1111\dots$,
    $\dots 11110000\dots$ or $\dots 00001111\dots$)

\begin{thm}\cite[Theorem~2.2]{CHS}
\label{thm:CHS}
For any $\epsilon > 0$ exists a connected complete trap $H$
    such that its Lebesgue measure Leb is less than $\epsilon$.
\end{thm}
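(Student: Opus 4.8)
The plan is to pass to the symbolic model, where $B$ is conjugate to the full two-sided shift $\sigma$ on $\{0,1\}^{\mathbb Z}$, Lebesgue measure corresponds to the $(\tfrac12,\tfrac12)$-Bernoulli measure $\mu$, and a hole that is a finite union of (shifted) cylinders corresponds to a finite set $W\subset\{0,1\}^{*}$ of forbidden factors, with $\J(H)=\mathsf X_W:=\{x:\text{no factor of }x\text{ lies in }W\}$. The starting point is that, smallness aside, a complete trap is cheap: take $W_0=\{10\}$, i.e.\ the vertical strip $\{x:x_0x_1=10\}$ of area $\tfrac14$. Indeed a sequence omits the factor $10$ iff $\{i:x_i=1\}$ is an up-set, i.e.\ $x$ is $0^{\infty}$, $1^{\infty}$, or $0^{\infty}1^{\infty}$ with the transition at some integer; the corresponding points all lie on $\partial([0,1]^2)$ and make up exactly the orbits of $0^{\infty}$, $1^{\infty}$ and $0^{\infty}1^{\infty}$ among those listed in the statement. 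So the whole difficulty is to make such a trap have tiny measure.

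The mechanism I would use is that $\J(H)$ depends only on the set $W$, not on where in $X$ the corresponding cylinders are placed, and that $W$ may be inflated without changing $\J(H)$: replacing $10$ by the family $W_m=\{10v:v\in\{0,1\}^m\}$ of all its length-$(m{+}2)$ right extensions leaves $\mathsf X_{W_m}=\mathsf X_{\{10\}}$, because in a bi-infinite sequence every occurrence of $10$ is followed by $m$ further symbols. For each $m$ we thus obtain a complete trap built from $2^m$ cylinders of measure $2^{-(m+2)}$ each, and we are free to realise $[10v]$ as a shifted cylinder $[10v]_{j_v}$ --- a dyadic sub-rectangle of the square whose aspect ratio is governed by $j_v$ --- for any integers $j_v$ we choose. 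The aim is to pick the offsets so that the $2^m$ rectangles (i) overlap one another so heavily that $\mathrm{Leb}\big(\bigcup_v[10v]_{j_v}\big)\to 0$ as $m\to\infty$, and (ii) have connected union. Both effects already appear in the toy instance $H=[100]_0\cup[101]_{-2}$: here $\mathsf X_{\{100,101\}}=\mathsf X_{\{10\}}$, the two cylinders are compatible (each forces $x_0=1$) so their rectangles meet in a common sub-rectangle and $H$ is connected, and inclusion--exclusion gives $\mathrm{Leb}(H)=\tfrac18+\tfrac18-\tfrac1{32}=\tfrac7{32}<\tfrac14$.

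The substantive step --- and the one I expect to be the real obstacle --- is the scheduling of the offsets $\{j_v\}$ together with the ensuing area estimate. One wants to string the words $10v$ into ``chains'' in which consecutive cylinders are shifts of one another by a small amount and agree on a long overlap, so that a whole block of coordinates is pinned down at once and the measures telescope, and simultaneously to link the chains into one connected region; the rigidity of the common prefix $10$ restricts which cylinders can be compatibly overlapped and will force a partition of $\{0,1\}^m$ into several ``interlockable'' families, each laid out and attached separately. Making the resulting union both connected and of Lebesgue measure $<\varepsilon$ --- estimating $\mathrm{Leb}\big(\bigcup_v[10v]_{j_v}\big)$ by, say, the probability that a fair coin sequence matches a fixed reference word on one of a controlled collection of windows --- is the crux; once that is achieved for a suitable $m$, transporting the configuration back through the conjugacy yields a connected complete trap for $B$ of Lebesgue measure $<\varepsilon$.
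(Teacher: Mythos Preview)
The paper does not prove this theorem directly --- it is quoted from \cite{CHS} --- but the paper's own Lemma~2.9/Theorem~2.4, specialised to the full $2$-shift with the $(\tfrac12,\tfrac12)$ Bernoulli measure, yields a proof, and that is the natural point of comparison. The paper's method is \emph{iterative}: start with $\C_0=X$, refine it into equal-length cylinders each of measure $<\mu(\C_n)/2$, split the family into two halves of roughly equal mass (Lemma~2.8), and use mixing to find a single shift $\ell$ so that one half, shifted by $\ell$, overlaps the other substantially; inclusion--exclusion then gives $\mu(\C_{n+1})\le \mu(\C_n)-\tfrac1{32}\mu(\C_n)^2$, whence $\mu(\C_n)\to0$. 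No clever scheduling is needed: the overlap is produced automatically by mixing, and the argument works for any mixing subshift, not just the full shift.

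Your plan is genuinely different --- start from the specific trap $[10]$, inflate to $W_m=\{10v:v\in\{0,1\}^m\}$, and try to place the $2^m$ cylinders at offsets $j_v$ so that the union is tiny and connected --- and it is in the spirit of the explicit constructions in \cite{CHS}. But as written it is not a proof: you yourself flag the ``scheduling of the offsets $\{j_v\}$ together with the ensuing area estimate'' as ``the real obstacle'' and then stop. That is exactly the content of the theorem. The toy example shaves $\tfrac14$ down to $\tfrac7{32}$, but nothing in the proposal shows how to drive the measure below an arbitrary $\varepsilon$; and the rigidity you notice (every word begins $10$, so two windows can overlap only where the tail $v$ itself contains the pattern $10$ at the right position) is a real combinatorial constraint that has to be handled, not merely named. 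Until you produce an explicit placement (e.g.\ via a de~Bruijn--type arrangement, as the connection with unavoidable sets in Remarks~2.3 and~2.11 suggests) together with a bound $\mathrm{Leb}\big(\bigcup_v\sigma^{-j_v}[10v]\big)\to0$, the argument is incomplete.

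If you want to repair it with minimal new machinery, note that the paper's iterative halving trick can be grafted directly onto your setup: starting from $\C_0=[10]$ (rather than $X$), refine, split in half, shift one half, and repeat. This avoids the one-shot scheduling problem entirely and still yields a connected union of dyadic rectangles in $[0,1]^2$.
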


By comparison, we also know

\begin{thm}\cite[Theorem~2.4]{CHS}\label{thm:CHS-large}
\label{thm:CHS2}
If $H$ is a hole whose closure is disjoint from the boundary of the square,
    then $\dim_H\J(H) > 0$.
In particular, for any $\epsilon>0$ there exists
    $H$ such that $\text{Leb}(\J(H))>1-\epsilon$.
\end{thm}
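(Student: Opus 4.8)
The plan is to realize inside $\J(H)$ a Cantor set of positive Hausdorff dimension consisting of points whose orbits permanently hug the boundary of the square; the second, measure-theoretic assertion will be handled by a separate and much softer argument.

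First I would pass to symbolic dynamics, coding $X=[0,1]^2$ by $\{0,1\}^{\BbZ}$ so that the code $(a_j)_{j\in\BbZ}$ of $(x,y)$ has $x=0.a_0a_1a_2\cdots$ and $y=0.a_{-1}a_{-2}\cdots$, with a continuous surjection $\psi\colon\{0,1\}^{\BbZ}\to X$ satisfying $\psi\circ\sigma=B\circ\psi$, where $\sigma$ is the shift. Since $\overline{H}$ is compact and disjoint from $\partial X$ it lies at some distance $\rho>0$ from $\partial X$; fix an integer $q\ge 1$ with $2^{-q}<\rho$. The elementary remark driving everything is that if a code $b$ satisfies $b_0=\cdots=b_{q-1}$ or $b_{-1}=\cdots=b_{-q}$, then $\psi(b)$ has an $x$- or a $y$-coordinate in $[0,2^{-q}]\cup[1-2^{-q},1]$, hence lies within $\rho$ of $\partial X$ and so is not in $H$. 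Consequently $\psi(a)\in\J(H)$ as soon as $a$ has the property that for every $k\in\BbZ$ at least one of the two length-$q$ windows $a_{[k,k+q-1]}$, $a_{[k-q,k-1]}$ is constant.

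The heart of the matter is to exhibit a positive-dimensional family of such codes, and this is the step I expect to be the main obstacle. The naive idea --- forcing \emph{every} length-$q$ future window $a_{[k,k+q-1]}$ to be constant, so that $x$ stays near $\{0,1\}$ for all time --- fails, since overlapping constant windows force the whole code to be constant and destroy the dimension. The fix is to use constant blocks of length $2q$ and let the two coordinates take turns: let $\Sigma_0$ be the set of codes of the form $\cdots d_{-1}^{2q}d_0^{2q}d_1^{2q}\cdots$ with $(d_i)_{i\in\BbZ}\in\{0,1\}^{\BbZ}$ \emph{arbitrary}. Every $a\in\Sigma_0$ has the window property above: if for some $k$ neither $a_{[k,k+q-1]}$ nor $a_{[k-q,k-1]}$ were constant, each window would have to cross a $2q$-block boundary, producing multiples of $2q$ lying in $\{k+1,\dots,k+q-1\}$ and in $\{k-q+1,\dots,k-1\}$ respectively; but two distinct multiples of $2q$ differ by at least $2q$, while these two sets of integers are separated by at most $2q-2$ --- a contradiction. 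Hence $\psi(\Sigma_0)\subseteq\J(H)$. For the dimension I would project onto the first coordinate: $\pi_x$ is $1$-Lipschitz, so $\dim_H\J(H)\ge\dim_H\psi(\Sigma_0)\ge\dim_H\pi_x(\psi(\Sigma_0))$, and $\pi_x(\psi(\Sigma_0))$ contains $E=\{\,0.d_0^{2q}d_1^{2q}d_2^{2q}\cdots : d_i\in\{0,1\}\,\}$. But $E$ is exactly the attractor of the iterated function system $\{\,t\mapsto 2^{-2q}t,\ t\mapsto 2^{-2q}t+1-2^{-2q}\,\}$ on $[0,1]$, whose two images $[0,2^{-2q}]$ and $[1-2^{-2q},1]$ are disjoint for $q\ge 1$; hence $\dim_H E$ is the solution $s$ of $2\cdot(2^{-2q})^{s}=1$, namely $s=1/(2q)>0$, and the first assertion follows.

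For the second assertion I would first point out that it cannot be derived from the first, and in fact cannot hold for any non-empty \emph{open} hole: $B$ is mixing for Lebesgue measure, so almost every orbit is dense, and $\J(H)$ is Lebesgue-null whenever $H$ is open and non-empty. One must therefore take a degenerate hole meeting $\partial X$, for instance the segment $H_0=\{\tfrac12\}\times[0,1]$. A point meets $H_0$ at time $k$ precisely when $(a_j)_{j\ge k}$ is eventually constant, and the set of codes for which this occurs for some $k\in\BbZ$ is a countable union of Lebesgue-null sets; hence $\text{Leb}(\J(H_0))=1>1-\epsilon$. Apart from the construction of $\Sigma_0$ and the verification of its window property, everything here --- the reduction to the boundary, the self-similarity computation, and the ergodicity remark --- is routine.
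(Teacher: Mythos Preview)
This theorem is quoted from \cite{CHS} and is not proved in the present paper, so there is no in-paper argument to compare against; I comment on the proposal on its own terms.

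Your proof of the first assertion is correct and self-contained: the block-of-length-$2q$ construction is the right idea, the pigeonhole argument showing that for every $k$ at least one of the two adjacent length-$q$ windows lies entirely inside a single $2q$-block is clean, and projecting to the $x$-axis gives a self-similar set of dimension $1/(2q)>0$.

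For the second assertion you have put your finger on a genuine issue: as literally written, with $\text{Leb}(\J(H))>1-\epsilon$, the claim is false for every nonempty open $H$, since mixing of $B$ forces $\text{Leb}(\J(H))=0$. Comparison with the rest of the paper --- the heading of Section~\ref{sec:large}, Theorem~\ref{thm:large-holes 2}, and especially Theorem~\ref{thm:large-toral}, where the corresponding inequality reads $\H_m(H)>1-\epsilon$ --- makes it clear that the intended statement is $\text{Leb}(H)>1-\epsilon$: a \emph{large} hole whose survivor set nevertheless has positive Hausdorff dimension. Under that reading the ``in particular'' is immediate from the first part by taking $H=(\rho,1-\rho)^2$ for small $\rho$. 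Your workaround with the degenerate hole $H_0=\{1/2\}\times[0,1]$ does answer the literal (mis)statement, though one small slip: $B^k(x,y)\in H_0$ means $(a_k,a_{k+1},\ldots)\in\{10^\infty,\,01^\infty\}$, not merely that $(a_j)_{j\ge k}$ is eventually constant; the conclusion $\text{Leb}(\J(H_0))=1$ is unaffected, since the exceptional set is still a countable union of null sets.
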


The purpose of this note is to extend these two results from the full shift on two symbols to more
general subshifts (Theorems~\ref{thm:main} and \ref{thm:large-holes 2})
and apply these to the generalized Pisot toral automorphisms (Section~\ref{sec:toral}).

\section{Symbolic model: small holes}
\label{sec:symbolic}

Let $\A$ be a finite alphabet.
Let $\sigma:\A^\BbN\to \A^\BbN$ denote the left shift, i.e.,
\[
\sigma(\dots x_{-2} x_{-1} x_0.\ x_1 x_2 x_3\dots) =
                \dots x_{-2} x_{-1} x_0 x_1 .\ x_2 x_3\dots
                \]
We define distance on $\A^\BbN$ with
    $\A=\{0,1,\dots,k-1\}$ in the usual way as
    \[ d(\bm x, \bm y)=\sum_{i\in\mathbb Z} \frac{|x_i-y_i|}{k^{|i|}}.\]
This topology is equivalent to the product topology.
We say that $X \subset \A^\BbN$ is a {\em subshift} if $X$ is closed
    under this topology, and $\sigma(X) = X$.
Suppose $\mu$ is a probability $\sigma$-invariant
    measure on $X$ which we assume to be positive on all cylinders.
We denote the set of admissible words of length~$n$ for $X$ by $\L_n(X)$
and put $\L(X)=\bigcup_{n\ge 0} \L_n(X)$, the {\em language of $X$}.

The topological entropy of a subshift $X$ is defined by the formula
\[
h(X)=\lim_{n\to\infty} \frac{\log \#\L_n(X)}n.
\]
Recall that a subshift $(X,\sigma)$ is called {\em irreducible} if, for every ordered pair of
words $u$ and $v\in\L(X)$, there is a $w\in\L(X)$ with $uwv\in\L(X)$.
We say a subshift is {\em sofic} if there exists a regular language (i.e.,
a language accepted by a finite automaton) such that $X$ is the set
of all infinite sequences that do not contain a subword from this regular language.
For more detailed see \cite[Chapter~3]{LindMarcus}.

An irreducible sofic subshift is known to have a unique
measure of maximal entropy $\mu$ (see \cite{Weiss}), which has the following property (see
\cite[Lemma~4.8]{Hu-Lin}):
there exists $\theta\in(0,1)$ such that
\begin{equation}\label{eq:sofic}
\mu[w_{-n+1}\dots w_0.w_1\dots w_n]\asymp \theta^n,\quad \forall w_{-n+1}\dots w_n\in\L_{2n}(X).
\end{equation}
(Clearly, $\theta=\frac12\exp(-h(X))$,
in view of the Shannon--McMillan--Breiman theorem.)

Let $w = w_1 w_2 \dots w_n \in\L(X)$; we denote by $[w]$ the cylinder given by
    all $\{x_j\}_{j\in \BbZ} \in X$ with
    $x_{1} = w_1, x_{2} = w_2, \dots, x_{n} = w_n$.

Recall that $\mu$ is called {\em non-atomic} if $\mu(\{\bm x\})=0$ for any $\bm x\in X$.
We say that a measure $\mu$ is {\em mixing} if for any pair of cylinders $[w], [w']$ we have
$\mu(\sigma^{-n}[w]\cap [w'])\to\mu([w])\mu([w'])$ as $n\to+\infty$.
Similarly to the case of the baker's map, we say that a Borel $A\subset X$ is a
{\em complete trap} if it intersects all
orbits in $X$, i.e., for any $x\in X$ there exists $n\in\BbZ$ such that $\sigma^n x\in A$.

\begin{rmk}\label{rmk:non-atomic}
Notice that a mixing measure positive on all cylinders is always non-atomic.
Indeed, assume that $\mu(A)>0$, where $A=\{\bm x\}$. Then by mixing,
$\mu(\sigma^{-n}A\cap A)\to\mu(A)^2$, which is impossible unless $\bm x$ is a fixed point
for $\sigma$. If $\bm x$ is such, then it must have $x_i\equiv j$ for all $i\in\mathbb Z$
and we have $\mu(A)=1$, which is impossible, since $\mu$ cannot be supported by
a single orbit, being positive on all cylinders.
\end{rmk}

\begin{rmk}\label{rmk:un}
A similar notion exists in Combinatorics on Words, namely, unavoidable sets.
A subset $S$ of $\mathcal A^*$ is called {\em unavoidable} if any word in $\mathcal A^*$
contains an element of $S$ as a factor. These have been introduced by M.-P.~Sch\"utzenberger in \cite{Schutz},
and various results, mostly regarding their size, have been proved since -- see, e.g.,
\cite{Champ} and references therein.
\end{rmk}

In the case of $[0,1]$ and $[0,1]^2$ in Section \ref{sec:intro}, we found connected components with
    unusual properties.
If $X$ is a subshift, then $X$ is totally disconnected, so $X$ has no
    non-trivial connected components.
Instead we consider what we believe is a natural analog of connected, in the
    case of subshifts.
That is, being a finite union of overlapping shifted
    cylinders.
More formally
\begin{defn}
Let $C = \{\sigma^{-r_1}[w_1], \dots, \sigma^{-r_n}[w_n]\}$ be a finite
    collection of shifted cylinders.
We say that $C$ is {\em overlapping}
    if for all $i$ and $j$ there exists $i = i_1, i_2, \dots, i_n = j$
    such that $\mu( \sigma^{-i_k}[w_{i_k}] \cap
    \sigma^{-i_{k+1}}[w_{i_{k+1}}]) > 0$.
\end{defn}

The main result of this section is
    that for each two-sided subshift with mild assumptions
    there exist arbitrarily small overlapping complete traps.
More precisely:

\begin{thm}
\label{thm:main}
Let $X$ be a two-sided subshift on a finite alphabet $\A$ endowed with
    a shift-invariant probability measure $\mu$ on $X$ which we assume to be
    positive on all cylinders and mixing (hence non-atomic) in $X$.

Then for any $\epsilon>0$ there exists a finite union of shifted cylinders
    $\C := \bigcup_{i=1}^n \sigma^{-r_i}[w_i]$ such that
\begin{enumerate}
\item $\mu(\C)<\epsilon$.
\item $\C$ is a complete trap.
\item for all $i, j$ we have
    $\mu(\sigma^{-r_i}[w_i] \cap \sigma^{-r_j}[w_j]) > 0$.
\end{enumerate}
\end{thm}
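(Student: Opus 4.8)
I want to build a small overlapping complete trap out of shifted cylinders. The basic idea is a pigeonhole/covering argument: fix a large integer $N$ and look at the words of length $N$ in $\L_N(X)$. For each point $\bm x\in X$, the word $x_1\dots x_N$ it exhibits in coordinates $1$ through $N$ lies in $\L_N(X)$. If I could choose, for each admissible word of length $N$, a single ``long'' subword to forbid — i.e. a cylinder that the orbit is forced to enter — then the union of those cylinders (suitably shifted) would be a complete trap. The subtlety is that I need (a) the total measure small, (b) the cylinders to be genuinely overlapping in the sense of the definition, not just a disjoint union.

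**Key steps.** First I would fix $\epsilon>0$ and use non-atomicity (Remark~\ref{rmk:non-atomic}) plus positivity on cylinders to choose parameters: since $\mu$ is non-atomic, for any $\delta>0$ there is $m$ so large that every cylinder of length $m$ has measure $<\delta$. Second, I would pick a single long admissible word $v\in\L_m(X)$ — say one of maximal measure, or just any fixed one — and consider, for a window of length $N\gg m$, all the ``positions'' where a copy of (a perturbation of) $v$ could sit; the point is to cover $\L_N(X)$ by a controlled number of cylinders each obtained by specifying an $m$-block equal to some fixed word. Concretely, I expect the right construction is: take a word $w$ that is \emph{unavoidable} inside $X$ in the combinatorial sense — every sufficiently long admissible word contains it — then the associated shifted cylinders $\sigma^{-r}[w]$ over the relevant shifts $r$ form a complete trap, because any orbit's forward coordinates contain $w$ somewhere. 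To get the measure small I want $w$ long (hence each cylinder small) while keeping the \emph{number} of shifts needed bounded; and to get overlapping I want to connect these cylinders through a common sub-block. The cleanest device: let all the $w_i$ share a large common factor, e.g. $w_i = u\,a_i\,u$ where $u$ is a fixed long admissible word and the $a_i$ are short connectors — then $\sigma^{-r_i}[w_i]\cap\sigma^{-r_j}[w_j]$ contains the cylinder specifying $u$ in the overlapping region, which has positive measure, giving (3); meanwhile $\mu(\sigma^{-r_i}[w_i])\le\mu([u])$ which I can force below $\epsilon/n$.

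**The main obstacle.** The crux is producing, inside a general subshift with no structure beyond a mixing measure, a finite family of long words whose shifted cylinders genuinely \emph{cover every orbit} — i.e. an unavoidable set of long words — while controlling its cardinality $n$ so that $n\cdot\mu([u])<\epsilon$. In the full-shift case one simply takes all $2^m$ words of a given length as the "hole at a fixed position" and slides; here admissibility both helps (fewer words) and hurts (can't freely perturb). I expect the argument to go: by mixing, pick a word $u$ with $\mu([u])>0$; by irreducibility-type consequences of mixing, for any admissible $N$-word there is a bounded-length admissible continuation that lands on a fixed block, so within a window of length $N+O(1)$ every orbit hits one of finitely many target cylinders. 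Making "finitely many" quantitatively compatible with "measure $<\epsilon$" is where the real work lies, and I would handle it by letting the forced block itself be long (a high power or a long fixed word of small measure) so that a single target cylinder, rather than many, does the job at each position — then overlap is arranged by the shared-factor trick above. The verification that the resulting finite union is a complete trap is then a direct unwinding of definitions: given $\bm x\in X$, its restriction to some long window is admissible, hence contains a shift of $u$ followed by the forced block, so some $\sigma^n\bm x$ lies in $\C$.
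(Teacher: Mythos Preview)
Your proposal correctly isolates the difficulty --- producing an unavoidable family of long words whose total measure is small --- but the mechanism you sketch for resolving it does not work. The idea of taking a single long word $u$ (or words of the form $u\,a_i\,u$) and declaring its shifted cylinders a complete trap fails outright: in any subshift supporting a mixing measure positive on all cylinders, \emph{no single finite word is unavoidable}. Indeed, for any fixed $u$ the set $\{\bm x: u \text{ never occurs in } \bm x\}$ is a nonempty subshift (e.g.\ in the full $2$-shift, $\dots000\dots$ avoids any word containing a $1$). Your fallback --- ``by irreducibility-type consequences of mixing, for any admissible $N$-word there is a bounded-length admissible continuation that lands on a fixed block'' --- conflates the existence of an admissible extension hitting $u$ with the claim that every orbit \emph{actually} hits $u$; the latter is what ``complete trap'' requires, and it is false. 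So the closing sentence, ``its restriction to some long window is admissible, hence contains a shift of $u$,'' is a non sequitur.

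The paper's argument supplies exactly the missing idea, and it is quite different in flavour from a combinatorial unavoidable-set construction. One starts with the trivial complete trap $\C_0=X$ and iterates: write $\C_n$ as a disjoint union of equal-length cylinders each of measure $<\mu(\C_n)/2$, split this collection into two halves $\C_n',\C_n''$ each of measure $>\mu(\C_n)/4$, and then use mixing to choose $\ell$ with $\mu([w']\cap\sigma^{-\ell}[w''])\ge\tfrac12\mu[w']\mu[w'']$ for all pairs. Setting $\C_{n+1}=\C_n'\cup\sigma^{-\ell}\C_n''$ gives another complete trap (same words, just shifted) with
\[
\mu(\C_{n+1})\le \mu(\C_n)-\tfrac12\mu(\C_n')\mu(\C_n'')\le \mu(\C_n)-\tfrac{1}{32}\mu(\C_n)^2,
\]
so $\mu(\C_n)\to 0$. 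Once $\mu(\C_n)<\epsilon$, a separate application of mixing (shift the $i$th cylinder by $Mi$ for $M$ large) makes all pairwise intersections positive, giving condition~(3). The point is that smallness is achieved not by choosing long words a priori, but by repeatedly \emph{overlapping the trap with a shift of itself}; this is the step your proposal lacks.
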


This will be proven by a series of lemmas.

\begin{lemma}\label{lem:non-atomic}
Assume that $\mu$ is a shift-invariant non-atomic measure on $X$ and $w=w_1w_2\dots$
is such that $w_1\dots w_n\in\L_n(X)$ for all $n\ge1$. Then $\mu[w_1\dots w_n]\to0$
as $n\to\infty$.
\end{lemma}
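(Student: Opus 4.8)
The plan is to prove that the cylinder measures $\mu[w_1\dots w_n]$ form a non-increasing sequence that cannot have a positive limit, arguing by contradiction. First I would note that $[w_1\dots w_{n+1}]\subseteq[w_1\dots w_n]$, so $a_n:=\mu[w_1\dots w_n]$ is monotone non-increasing and bounded below by $0$; hence $a_n\downarrow a$ for some $a\ge0$. The claim is that $a=0$. Since the cylinders are nested and closed, $\bigcap_{n\ge 1}[w_1\dots w_n]$ is a single point $\bm x$ of $X$ (it is nonempty by compactness and consists of exactly those bi-infinite sequences whose positive coordinates read off $w$; note the negative coordinates are not yet constrained, so in fact the intersection is the cylinder ``$.\,w_1w_2\dots$'' fixing only the one-sided future).

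This last observation is the one subtlety: because $X$ is a two-sided subshift and $[w_1\dots w_n]$ only constrains coordinates $1,\dots,n$, the nested intersection is not a point but the set $[.\,w_1w_2\dots]$ of all two-sided sequences with prescribed non-negative part. So the cleaner route is: by continuity of measure from above, $a=\mu\big(\bigcap_n[w_1\dots w_n]\big)=\mu[.\,w_1w_2\dots]$. Now I would use shift-invariance together with non-atomicity. Suppose $a>0$. Consider the sets $E_k:=\sigma^{k}[.\,w_1w_2\dots]$ for $k\ge 0$; each has measure $a$ by shift-invariance. The key is that for $k\ne \ell$, the sets $E_k$ and $E_\ell$ are either equal or disjoint, and equality $E_k=E_\ell$ forces the one-sided sequence $w$ to be eventually periodic with period dividing $|k-\ell|$.

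If $w$ is \emph{not} eventually periodic, then all the $E_k$, $k\ge 0$, are pairwise disjoint, each of measure $a>0$, contradicting $\mu(X)=1$. If $w$ \emph{is} eventually periodic, say $w=u\,v^\infty$ with $|v|=p$, then $\bigcap_n[w_1\dots w_n]=[.\,u\,v^\infty]$ and applying $\sigma^{|u|}$ we may as well assume $w=v^\infty$ is purely periodic; then $[.\,v^\infty]$ is contained in the set of $\sigma^{p}$-fixed points having a prescribed $p$-block in their future. One shows directly that $\mu$ restricted to such a set must be atomic — indeed, $[.\,v^\infty]\cap\sigma^{-p}[.\,v^\infty]=[.\,v^\infty]$, and more carefully, intersecting with the analogous cylinders constraining coordinates $-m,\dots,0$ and letting $m\to\infty$ produces a single point (the two-sided periodic sequence $\overline{v}$), whose measure is therefore $a>0$, contradicting that $\mu$ is non-atomic.

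I expect the periodic case to be the main obstacle, since the generic non-periodic case is just the pigeonhole-type argument that infinitely many disjoint sets of equal positive measure cannot fit in a probability space. For the periodic case the point is to pin down the intersection $\bigcap_n[w_1\dots w_n]$ precisely and then squeeze it further, using shift-invariance along the negative coordinates, down to the single bi-infinite periodic point, at which stage non-atomicity delivers the contradiction. (This is exactly the mechanism used in Remark~\ref{rmk:non-atomic} to rule out atoms, so the hard part is really the bookkeeping that reduces the general eventually-periodic $w$ to that case.) An alternative, perhaps slicker, finish for the periodic case: once $w=v^\infty$, note $[.\,v^\infty]=\bigcap_{j\ge 0}\sigma^{-jp}[v]$ is $\sigma^{p}$-invariant, and apply the same disjointness-or-equality dichotomy to its $\sigma$-translates $\sigma^{i}[.\,v^\infty]$ for $i=0,\dots,p-1$, which are pairwise disjoint (distinct periodic futures) of common measure $a$, so $pa\le 1$; that alone does not give a contradiction, so one genuinely needs the reduction to a single point via the negative coordinates as above.
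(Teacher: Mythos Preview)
Your argument is correct, but it takes a considerably longer route than the paper's. You identify the limit $a=\mu\bigl(\bigcap_n[w_1\dots w_n]\bigr)=\mu[.\,w_1w_2\dots]$, then split according to whether $w$ is (purely) periodic: in the aperiodic case the shifts $\sigma^k[.\,w]$ are pairwise disjoint sets of common measure $a$, forcing $a=0$; in the periodic case $w=v^\infty$ you use that $\sigma^{kp}[.\,v^\infty]\subset[.\,v^\infty]$ are nested sets all of measure $a$ whose intersection is the single bi-infinite periodic point, so $a>0$ would produce an atom. (Two small wobbles: intersection of $E_k$ and $E_\ell$ forces $w$ to be \emph{purely} periodic, not merely eventually periodic, and the dichotomy is ``nested or disjoint'' rather than ``equal or disjoint''---but neither affects the validity of the argument.)

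The paper bypasses all of this with a single observation: by shift-invariance,
\[
\mu[.\,w_1\dots w_n]=\mu\bigl[w_1\dots w_{\lfloor n/2\rfloor}.\,w_{\lfloor n/2\rfloor+1}\dots w_n\bigr],
\]
and the right-hand cylinder has diameter tending to $0$ as $n\to\infty$. A short compactness argument then gives that if the measures stayed bounded below by some $c>0$, any accumulation point of these shrinking cylinders would be an atom of mass $\ge c$. The trick of re-centring the cylinder so that it pins down coordinates on \emph{both} sides of the origin is what lets non-atomicity bite immediately, without ever needing to discuss periodicity of $w$ or disjointness of translates. Your approach is self-contained and transparent about where each hypothesis is used, but the paper's is decidedly more economical.
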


\begin{proof}Since $\mu$ is shift-invariant, we have
$\mu[.w_1\dots w_n]=\mu[w_1\dots w_{\lfloor n/2\rfloor}.w_{\lfloor n/2\rfloor+1}\dots w_n]$, so
the claim follows from $\mu$ being non-atomic.
\end{proof}

\begin{lemma}\label{lem:partition-cylinder}
Let $\mu$ is a shift-invariant non-atomic measure on a subshift $X$.
Let $\C = \bigcup \sigma^{-r_i}[w_i]$ be a finite union of shifted cylinders with all $r_i \geq 0$.
For all $\epsilon > 0$
    we can write $\C = \bigcup [w_i']$ as a disjoint union of cylinders where $w_i'$
    have equal length, and $\mu([w_i']) < \epsilon$ for all $i$.
\end{lemma}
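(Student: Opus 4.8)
The plan is to reduce everything to cylinders of a common, sufficiently large length and then invoke non-atomicity to make each piece small. First I would normalise the shifted cylinders. Each set $\sigma^{-r_i}[w_i]$ with $r_i\ge 0$ is itself a cylinder: it depends on coordinates in the index window $[r_i+1, r_i+|w_i|]$ (here I am using the convention, consistent with the definition of $[w]$ in the excerpt, that $[w_i]$ constrains coordinates $1,\dots,|w_i|$, so $\sigma^{-r_i}[w_i]$ constrains coordinates $r_i+1,\dots,r_i+|w_i|$). Let $N$ be large enough that $[r_i+1,r_i+|w_i|]\subseteq\{1,\dots,N\}$ for every $i$; since all $r_i\ge 0$ and there are finitely many cylinders, such an $N$ exists. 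Then each $\sigma^{-r_i}[w_i]$ is a finite disjoint union of length-$N$ cylinders $[u]$ with $u\in\L_N(X)$, namely those $u$ that agree with $w_i$ on the appropriate window. Collecting these over all $i$ and discarding repetitions, we may write $\C=\bigsqcup_{u\in S}[u]$ for a finite set $S\subseteq\L_N(X)$; the cylinders are automatically disjoint since distinct words of the same length give disjoint cylinders, and they all have equal length $N$. This already gives the ``disjoint union of equal-length cylinders'' part of the statement; what remains is to arrange the measure bound $\mu([w_i'])<\epsilon$.

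The second step is to refine further so that each piece has measure below $\epsilon$. Fix any $u\in S$. I claim that for every $\epsilon>0$ there is $M\ge N$ such that every length-$M$ cylinder $[v]$ contained in $[u]$ has $\mu([v])<\epsilon$. Suppose not: then for each $M$ there is a length-$M$ word $v^{(M)}$ extending $u$ with $[v^{(M)}]\subseteq[u]$ and $\mu([v^{(M)}])\ge\epsilon$. Since there are only finitely many one-sided extensions of each length and the measures are monotone along extensions, a compactness / König's-lemma argument produces an infinite one-sided sequence $w=w_1w_2\cdots$ with $w_1\cdots w_N=u$, all prefixes admissible, and $\mu[w_1\cdots w_m]\ge\epsilon$ for all $m$ — contradicting Lemma~\ref{lem:non-atomic}. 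Hence such an $M$ exists for each $u$; taking the maximum of these $M$ over the finitely many $u\in S$ gives a single $M$ that works simultaneously. Replacing each $[u]$ by the finite disjoint union of the length-$M$ cylinders contained in it, and combining across all $u\in S$, we obtain $\C=\bigsqcup[w_i']$ with all $w_i'\in\L_M(X)$ of common length $M$, pairwise disjoint, and $\mu([w_i'])<\epsilon$ for each $i$, as required.

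The only genuinely non-routine point is the claim in the second step that a uniform length $M$ can be chosen; everything else is bookkeeping about index windows. The heart of that claim is exactly Lemma~\ref{lem:non-atomic}: shrinking cylinders along any fixed admissible one-sided sequence have measure tending to $0$. Since $\C$ is a \emph{finite} union, we only need this for finitely many ``root'' words $u$ and can take a maximum, so no uniformity over an infinite family is needed and the finiteness hypothesis in the lemma is used essentially. I would present the compactness step briefly (it is the standard fact that $[u]$ is covered by its length-$M$ subcylinders and that an infinite descending chain of non-null ones would yield a sequence violating Lemma~\ref{lem:non-atomic}), and leave the window arithmetic of the first step to a single sentence.
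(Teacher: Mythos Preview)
Your argument is correct and follows essentially the same route as the paper: decompose each $\sigma^{-r_i}[w_i]$ into length-$N$ cylinders by filling in the free coordinates, then take $N$ large enough and invoke Lemma~\ref{lem:non-atomic}. Your K\"onig-type compactness step making the bound $\mu([v])<\epsilon$ uniform over all length-$M$ cylinders is a point the paper glosses over with a one-line appeal to Lemma~\ref{lem:non-atomic}, so your version is in fact slightly more careful.
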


\begin{proof}
It is worth observing that the $\sigma^{-r_i}[w_i]$ may overlap, and the $w_i$ may have different
    lengths.
For $N \geq |w_i|+r_i$, we see that we can find a prefix $u$ of length $r_i$ and suffix $v$ of length
    $N - r_i - |w_i|$ such that $u w_i v \in \L(X)$ is a word of length $N$.
We let $S$ be the set of all such $u w_i v$ for all pairs $w_i$ and $r_i$.
This gives us a set $\C = \bigcup_{w_i' \in S} [w_i']$ of disjoint union of cylinders of equal length.
The fact that we can choose $N$ sufficiently large so that $\mu([w]) < \epsilon$ follows from
    Lemma \ref{lem:non-atomic}.
\end{proof}

\begin{lemma}\label{lem:cylinder-overlap}
Let $X$ be a two-sided subshift on a finite alphabet $\A$ endowed with
    a shift-invariant probability measure $\mu$ on $X$ which we assume to be
    positive on all cylinders and mixing (hence non-atomic) in $X$.
Let $\C = \bigcup [w_i]$ be a finite union of disjoint cylinders and a complete trap.
Then there exists $r_i \geq 0$ such that such that
\begin{enumerate}
\item     $\C' = \bigcup \sigma^{-r_i} [w_i]$ is a complete trap,
\item $\mu(\C') \leq \mu(\C)$ and
\item for all $i, j$ we have $\mu(\sigma^{-r_i}[w_i] \cap \sigma^{-r_j}[w_j]) > 0$.
\end{enumerate}
\end{lemma}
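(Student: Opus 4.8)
The idea is to choose each shift $r_i$ large enough so that the shifted cylinder $\sigma^{-r_i}[w_i]$ becomes ``almost independent'' of every other one in the family; mixing then forces all pairwise intersections to have positive measure, while shift-invariance keeps the measure from going up and the complete-trap property is preserved because shifting does not destroy any orbit that was already caught. Concretely, I would first fix an enumeration $[w_1],\dots,[w_n]$ of the (finitely many) disjoint cylinders making up $\C$. For a pair $i\ne j$, applying mixing to the cylinders $[w_i]$ and $\sigma^{-r_j}[w_j]$ (the latter is still a cylinder, just translated) gives $\mu(\sigma^{-m}[w_i]\cap\sigma^{-r_j}[w_j])\to\mu([w_i])\mu([w_j])>0$ as $m\to\infty$, since $\mu$ is positive on all cylinders. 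So for each ordered pair there is a threshold beyond which the intersection is positive; taking the maximum over the finitely many pairs, \emph{and} being slightly careful that the relevant coordinates of $w_i$ and $w_j$ sit far enough apart, yields a single $R$ such that choosing the $r_i$ to be sufficiently spread out (e.g.\ $r_i$ of the form $iR$ with $R$ large) makes $\mu(\sigma^{-r_i}[w_i]\cap\sigma^{-r_j}[w_j])>0$ for all $i,j$ simultaneously. This gives item~(3).

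For item~(2): since $\mu$ is shift-invariant, $\mu(\sigma^{-r_i}[w_i])=\mu([w_i])$ for each $i$, so by subadditivity $\mu(\C')\le\sum_i\mu(\sigma^{-r_i}[w_i])=\sum_i\mu([w_i])=\mu(\C)$, where the last equality uses that the original $[w_i]$ are disjoint. (One could instead observe that $\C'$ is a measurable image of $\C$ under a piecewise shift, but the subadditivity argument is cleaner.) For item~(1), that $\C'$ is still a complete trap: given any $x\in X$, because $\C$ is a complete trap there is some $k\in\BbZ$ with $\sigma^k x\in\C$, say $\sigma^k x\in[w_i]$; then $\sigma^{k-r_i}x\in\sigma^{-r_i}[w_i]\subset\C'$, so the orbit of $x$ still meets $\C'$. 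Note this step genuinely uses that $X$ is a two-sided subshift, so that $\sigma^{k-r_i}$ makes sense regardless of the sign of $k-r_i$; this is exactly where one-sidedness would fail.

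The main obstacle is making item~(3) hold \emph{simultaneously} for all pairs with a single choice of shifts, rather than pair by pair. The subtlety is that after we shift $[w_i]$ by $r_i$ to separate it from $[w_j]$, we have also moved it relative to $[w_k]$, and we must ensure we have not accidentally forced $\sigma^{-r_i}[w_i]$ and $\sigma^{-r_k}[w_k]$ to refer to overlapping coordinate windows that pin down incompatible symbols. The clean fix is to make the gaps $r_{i+1}-r_i$ all exceed $\max_\ell|w_\ell|$ plus the mixing threshold, so that the coordinate windows of the distinct shifted cylinders are pairwise separated by at least the mixing distance; then each pairwise intersection is, by the mixing property applied to two cylinders whose coordinate supports are far apart, within $\tfrac12\mu([w_i])\mu([w_j])$ of the product $\mu([w_i])\mu([w_j])>0$, hence positive. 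Since there are only finitely many pairs and finitely many words, a single large $R$ handles everything at once.
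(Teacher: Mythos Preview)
Your proposal is correct and follows essentially the same approach as the paper: both use mixing to find, for each ordered pair $(i,j)$, a threshold $M_{i,j}$ beyond which $\mu([w_i]\cap\sigma^{-n}[w_j])>0$, take $M=\max_{i,j}M_{i,j}$, and then set $r_i=iM$ (your $r_i=iR$). You actually supply more detail than the paper does---the paper's proof omits the verifications of (1) and (2) entirely and simply asserts that $r_i=M\cdot i$ ``satisfies the desired properties''---so your discussion of subadditivity for (2) and the two-sidedness argument for (1) makes your write-up more complete.
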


\begin{proof}
As $\mu$ is positive on all cylinders, we see that
    $\mu([w_i]) > 0$ for all choices $i$.
For all $w_i$ and $w_j$ there will exist a $M_{i,j}$ such that for all $n \geq M_{i,j}$ we have
    $\mu([w_i]) \cap \sigma^{-n}([w_j])) > 0$.
Let $M := \max_{i,j} M_{i,j}$.
Taking $r_i = M \cdot i$ satisfies the desired properties.
\end{proof}

\begin{lemma}\label{lem:simple}
Let $n\ge2,\ 0\le x_1\le\dots \le x_n\le\frac12$ with $\sum_{i=1}^n x_i=1$.
Then there exists $k\in\{1,\dots,n\}$ such that
$\sum_{i=1}^k x_i\ge \frac14$ and $\sum_{i=k+1}^n x_i \ge\frac14$.
\end{lemma}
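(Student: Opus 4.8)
The plan is to let $k$ be the smallest index for which the partial sum reaches $\tfrac14$, i.e.
\[
k:=\min\Bigl\{\,m\in\{1,\dots,n\}\ :\ \sum_{i=1}^m x_i\ge\tfrac14\,\Bigr\}.
\]
This set is nonempty since $\sum_{i=1}^n x_i=1\ge\tfrac14$, so $k$ is well defined, and by construction $\sum_{i=1}^k x_i\ge\tfrac14$, which is the first of the two required inequalities. It remains to show $\sum_{i=k+1}^n x_i\ge\tfrac14$, equivalently $\sum_{i=1}^k x_i\le\tfrac34$.

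For the upper bound on the initial block I would split into two cases. If $k=1$, then $\sum_{i=1}^k x_i=x_1\le\tfrac12\le\tfrac34$ and we are done. If $k\ge2$, then by minimality of $k$ we have $\sum_{i=1}^{k-1}x_i<\tfrac14$, and hence
\[
\sum_{i=1}^k x_i=\sum_{i=1}^{k-1}x_i+x_k<\tfrac14+x_k\le\tfrac14+\tfrac12=\tfrac34,
\]
using $x_k\le\tfrac12$. In either case $\sum_{i=1}^k x_i\le\tfrac34$, so $\sum_{i=k+1}^n x_i=1-\sum_{i=1}^k x_i\ge\tfrac14$, completing the proof. (Note the hypothesis $n\ge2$ is only needed so that the tail sum $\sum_{i=k+1}^n x_i$ can be nonempty; the monotonicity $x_1\le\dots\le x_n$ is in fact not used, only the two-sided bound $0\le x_i\le\tfrac12$.)

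There is no real obstacle here; the only point requiring a moment's care is that the "threshold index" argument needs the cap $x_i\le\tfrac12$ precisely to control the overshoot of the partial sum past $\tfrac14$ — without it the first block could jump from below $\tfrac14$ to nearly $1$, leaving too little for the tail. I would state this cleanly as the two-case split above rather than trying to unify it, since the $k=1$ case is genuinely degenerate (there is no preceding partial sum to invoke minimality against).
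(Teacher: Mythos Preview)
Your proof is correct and is essentially the paper's own argument: both pick the threshold index $k$ where the partial sum first reaches $\tfrac14$, then use $x_k\le\tfrac12$ to bound the overshoot and conclude the tail is at least $\tfrac14$. The only cosmetic difference is that the paper phrases the case split as $x_1\ge\tfrac14$ versus $x_1<\tfrac14$, which is exactly your $k=1$ versus $k\ge2$ split.
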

\begin{proof}If $x_1\ge\frac14$, then we take $k=1$. Otherwise
let $k$ be such that $\sum_1^{k-1} x_i<\frac14$ and $\sum_1^k x_i\ge \frac14$.
Then $\sum_{k+1}^n x_i = 1-\sum_1^{k-1} x_i - x_k\ge \frac12-\sum_1^{k-1} x_i\ge \frac14$.
\end{proof}

\begin{lemma}\label{lem:main}
Let $X$ be a two-sided subshift on a finite alphabet $\A$ endowed with
    a shift-invariant probability measure $\mu$ on $X$ which we assume to be
    positive on all cylinders and mixing (hence non-atomic) in $X$.

Then for any $\epsilon>0$ there exists a
    $\C := \bigcup [w_i]$, a finite union of disjoint cylinders
    such that
\begin{enumerate}
\item $\mu(\C)<\epsilon$.
\label{en:1}
\item $\C$ is a complete trap.
\label{en:2}
\end{enumerate}
\end{lemma}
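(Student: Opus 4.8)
The plan is to reach $\C$ through a finite sequence $\C_0,\C_1,\dots,\C_J$ of complete traps, each a finite union of cylinders, with $\C_0:=X=\bigcup_{a\in\A}[a]$ and $\mu(\C_{j+1})\le\tfrac34\mu(\C_j)+\eta$ for a constant $\eta$ fixed in advance. This yields $\mu(\C_j)\le(3/4)^j+4\eta$, so taking $\eta<\epsilon/8$ and $J$ large enough gives $\mu(\C_J)<\epsilon$; a final application of Lemma~\ref{lem:partition-cylinder} then rewrites $\C_J$ as a disjoint union of cylinders of a common length without changing its measure or the complete-trap property, producing the object the statement asks for.

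For the step $\C_j\rightsquigarrow\C_{j+1}$, first use Lemma~\ref{lem:partition-cylinder} (with $\tfrac12\mu(\C_j)$ in the role of $\epsilon$) to write $\C_j=\bigsqcup_{i=1}^m[v_i]$, a disjoint union of cylinders of equal length with $\mu[v_i]<\tfrac12\mu(\C_j)$ for all $i$; in particular $m\ge2$. Reorder so that $\mu[v_1]\le\dots\le\mu[v_m]$ and apply Lemma~\ref{lem:simple} to $x_i:=\mu[v_i]/\mu(\C_j)$ to get an index $k$ with $\sum_{i\le k}x_i\ge\tfrac14$ and $\sum_{i>k}x_i\ge\tfrac14$. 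Set $\C^+:=\bigcup_{i>k}[v_i]$ (we simply discard the other half); then $\mu(\C^+)\le\tfrac34\mu(\C_j)$ while $\mu(\C^+)>0$. Now put $U_\ell:=\bigcap_{n=0}^{\ell}\sigma^{-n}(X\setminus\C^+)$ — a finite union of cylinders for each $\ell$ — which decreases with $\ell$ and contains $\J(\C^+)=\bigcap_{n\in\BbZ}\sigma^{-n}(X\setminus\C^+)$. The crux is that $\mu(U_\ell)\downarrow0$: writing $F:=\bigcap_\ell U_\ell$ one has $F\subseteq\sigma^{-1}F$, hence $\mu(\sigma^{-n}F\,\triangle\,F)=0$ for all $n\ge0$ by shift-invariance, so (using $F\subseteq X\setminus\C^+$) $\mu(F)=\mu\bigl(\sigma^{-n}F\cap(X\setminus\C^+)\bigr)$ for every $n$; but mixing makes the right-hand side tend to $\mu(F)\,\mu(X\setminus\C^+)$, and $\mu(X\setminus\C^+)<1$ since $\mu(\C^+)>0$, forcing $\mu(F)=0$. (This is the argument of Remark~\ref{rmk:non-atomic}.) Choose $\ell$ with $\mu(U_\ell)<\eta$ and set $\C_{j+1}:=\C^+\cup U_\ell$, a finite union of cylinders with $\mu(\C_{j+1})\le\mu(\C^+)+\mu(U_\ell)\le\tfrac34\mu(\C_j)+\eta$.

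Finally, $\C_{j+1}$ is a complete trap: given $x\in X$, either its $\sigma$-orbit meets $\C^+\subseteq\C_{j+1}$, or else $\sigma^nx\notin\C^+$ for all $n\in\BbZ$, i.e.\ $x\in\J(\C^+)\subseteq U_\ell\subseteq\C_{j+1}$, so the orbit meets $\C_{j+1}$ already at time $0$. In particular every $\C_j$ is a complete trap.

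I do not expect a genuine technical obstacle; the point is to see the right move. Throwing away the heavier half $\bigcup_{i\le k}[v_i]$ of the current trap loses exactly those $x$ whose orbit, from then on, avoids $\C^+$ entirely — but that exceptional set is the survivor set $\J(\C^+)$, which mixing forces to be $\mu$-null, and, being an intersection of finite unions of cylinders, it is covered by finite unions of cylinders of measure tending to $0$; so it can be reabsorbed into the trap at the controlled cost $\eta$. Lemma~\ref{lem:simple} is exactly what guarantees that the heavier half carries at least a quarter of the current mass, producing the geometric factor $\tfrac34$; Lemma~\ref{lem:partition-cylinder} (resting on Lemma~\ref{lem:non-atomic}) is the bookkeeping that keeps each $\C_j$ a finite union of cylinders on which Lemma~\ref{lem:simple} can be run.
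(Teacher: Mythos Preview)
Your argument is correct, but it takes a genuinely different route from the paper's. The paper keeps \emph{both} halves $\C_n',\C_n''$ and shifts one of them by some $\ell$ (chosen via mixing so that $\mu(\sigma^{-\ell}[w']\cap[w''])\ge\tfrac12\mu[w']\mu[w'']$ for all pairs); the complete-trap property is then automatic, since shifting cylinders does not change which orbits they meet, and inclusion--exclusion yields the quadratic recursion $t_{n+1}\le t_n-\tfrac1{32}t_n^2$, forcing $t_n\to0$ (at rate $\asymp 1/n$). You instead \emph{discard} one half outright to gain the linear factor $3/4$, and then repair the complete-trap property by adjoining the small cylinder set $U_\ell$ approximating the (mixing-null) survivor set $\J(\C^+)$; this gives the affine recursion $\mu(\C_{j+1})\le\tfrac34\mu(\C_j)+\eta$.

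What each buys: the paper's overlap trick keeps the trap property for free and dovetails with the ``overlapping'' theme needed later for Theorem~\ref{thm:main}, at the cost of the slower $1/n$ decay noted in the paper's remark on unavoidable sets. Your approach is arguably more conceptual --- ``throw away a quarter of the mass, then patch the leak'' --- and gives geometric decay, but it invokes mixing in a different way (nullity of the one-sided survivor set $F=\bigcap_{n\ge0}\sigma^{-n}(X\setminus\C^+)$, via the same device as Remark~\ref{rmk:non-atomic}) rather than via a quantitative overlap bound on cylinders. Note that your mixing step uses an arbitrary Borel set $F$ against the cylinder set $X\setminus\C^+$; this is fine, since mixing on the generating algebra of cylinders extends to all Borel sets (and the paper itself uses this freely in Remark~\ref{rmk:non-atomic}), but it is worth saying explicitly.
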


\begin{proof}
Put $\C_0= \bigcup_{a \in \A}  [a]$.
Clearly $\C_0 = X$, and hence is a complete trap.

We proceed by induction.
Assume we have a finite collection of cylinders
    $\C_n = \bigcup \sigma^{-r_i} [w_i]$ which is a complete trap.
Using Lemma~\ref{lem:partition-cylinder} we write
    $\C_n = \bigcup [w_i']$, a disjoint union of cylinders
    where $\mu([w_i']) < \mu(\C_n)/2$ and all $w_i'$ are the same length,
    say $N$.

By Lemma~\ref{lem:simple} and the fact that all cylinders have
    $\mu([w']) < \mu(\C_n)/2$, we can
    partition the set of cylinders into two sets $\C_n'$ and $\C_n''$ where
\[
\mu(\C_n) / 4 < \min\,\Bigl\{\mu\Bigl(\bigcup_{w'\in \C_n'}[w']\Bigr),
\mu\Bigl(\bigcup_{w''\in \C_n''}[w'']\Bigr)\Bigr\}.
\]

Since $\mu$ is mixing, there exists $\ell$ such that
$\mu(\sigma^{-\ell}[w']\cap [w''])\ge\frac12\mu([w'])\mu([w''])$ for all
$[w']\in\C_n'$ and all $[w'']\in\C_n''$. Put
\[
\C_{n+1} = \left(\bigcup_{w' \in \C_n'} [w']\right) \bigcup
           \left(\bigcup_{w'' \in \C_n''} \sigma^{-\ell}[w'']\right)
\]
We have
\begin{align*}
\mu(\C_{n+1})&= \sum_{w' \in \C_n'} \mu([w']) +
    \sum_{w'' \in \C_n''} \mu([w''])
    -\sum_{w'\in\C_n', w''\in\C_n''}
    \mu([w'] \cap \sigma^{-\ell} [w'']) \\
&\leq \mu(\C_n') + \mu(\C_n'')
    -\sum_{w'\in\C_n'} \sum_{w''\in\C_n''}
     \frac{1}{2} \mu([w'])\mu[w'']) \\
&= \mu(\C_n)  - \frac{1}{2} \mu(\C_n') \mu(\C_n'')  \\
&\leq \mu(\C_n)  - \frac{1}{32} \mu(\C_n)^2
\end{align*}
Put $t_n=\mu(\C_n)$. Then we have
\begin{equation}\label{eq:tn}
t_{n+1}\le t_n - \frac{t_n^2}{32}.
\end{equation}
Clearly, $t_n$ is decreasing and positive; let $L=\lim_{n\to\infty}t_n$. Then $L\le L-L^2/32$,
whence $L=0$.

Choose $n$ such that $\mu(\C_n) < \epsilon$ and use Lemma \ref{lem:partition-cylinder} to write
$\C := \C_n$.  This union in the desired form.
\end{proof}

\begin{rmk}
It follows from the proof of Lemma \ref{lem:main} that $\mu$ being mixing can
be replaced with the following, weaker, condition: there exists a $\delta > 0$
such that for all cylinders $[w], [w'] \subset X$ we have
$\liminf_{n \to \infty} \mu(\sigma^{-n}[w] \cap [w']) \ge \delta \mu[w] \mu[w']$ --
provided we assume that $\mu(\{\bm x\})=0$ for any fixed point $\bm x \in X$.
\end{rmk}

\begin{rmk}
The sequence $t_n$ in (\ref{eq:tn}) tends to 0 as $\approx 1/n$.  This is consistent with the theory
of unavoidable sets (see Remark~\ref{rmk:un}), where it is shown for that the minimal size of
an unavoidable set (for the full shift on $\mathcal A$) is $\gg |\mathcal A|^n/n$ (\cite{Schutz, Mykk}).
\end{rmk}

\section{Symbolic model: large holes} \label{sec:large}
The goal of this section is to extend Theorem~\ref{thm:CHS-large} to more general subshifts.

\begin{thm}\label{thm:large-holes 2}
Let $X$ be a subshift endowed with a mixing probability measure
of maximal entropy $\mu$.
Let $Y \subset X$ be a subshift such that
    $ 0 < h(Y) < h(X)$ (i.e., a proper subshift of $X$).

Then for any $\epsilon>0$ there exists a finite overlapping union of cylinders
$\G$ such that $\mu(\G)>1-\epsilon$ and $Y\subset \J(\G)$. (So,
in particular, $h(\J(\G))>0$.)
\end{thm}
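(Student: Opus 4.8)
The plan is to build $\G$ as the complement (inside $X$, up to a null set) of a union of cylinders that is ``thin'' enough to avoid $Y$ but has $\mu$-measure at least $1-\epsilon$. The key observation is that since $h(Y)<h(X)$, the language $\L(Y)$ is exponentially sparse inside $\L(X)$: by the definition of topological entropy, for any $\eta$ with $h(Y)<\eta<h(X)$ we have $\#\L_n(Y)\le e^{\eta n}$ for all large $n$, while $\#\L_n(X)\ge e^{h(X)n}$ (or at least grows like that up to sub-exponential factors). Combined with the estimate \eqref{eq:sofic} for the measure of maximal entropy, namely $\mu[u]\asymp\theta^n$ with $\theta=\tfrac12 e^{-h(X)}$ for every word $u$ of length $n$ (read as a central block), this means the total measure of all length-$n$ cylinders that are \emph{admissible for $Y$} is at most $C\,\#\L_n(Y)\,\theta^n \le C\,e^{\eta n}\theta^n$, which tends to $0$ as $n\to\infty$ because $e^{\eta}\theta<1$.

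So first I would fix $\eta\in(h(Y),h(X))$ and choose $n$ large enough that (i) the estimate \eqref{eq:sofic} holds for words of length $2n$ with a uniform implied constant, (ii) $\#\L_n(Y)\le e^{\eta n}$, and (iii) the resulting bound $C e^{\eta n}\theta^n$ (or the analogous two-sided version with blocks of length $2n$) is less than $\epsilon$. Then I would let $\G$ be the union of all cylinders $[w]$ with $w$ a word of length $n$ (positioned symmetrically, say occupying coordinates $-n+1,\dots,n$ or simply coordinates $1,\dots,n$ — either works) such that $w\notin\L(Y)$, i.e., $w$ is \emph{not} a subword occurring in any point of $Y$. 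By construction $\mu(\G)=1-\mu(\text{the }Y\text{-admissible cylinders})\ge 1-\epsilon$. For the survivor set: if $\bm y\in Y$, then every window of length $n$ in $\bm y$ is a word in $\L(Y)$, hence no shift $\sigma^k\bm y$ lands in $\G$; therefore $Y\subset\J(\G)$, and since $h(Y)>0$ we get $h(\J(\G))>0$, in particular $\J(\G)$ is uncountable.

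It remains to arrange that $\G$ is an \emph{overlapping} finite union of cylinders in the sense of the Definition. Here $\G$ is a finite union of \emph{unshifted} cylinders $[w]$, all of the same length $n$. Two distinct length-$n$ cylinders $[w],[w']$ are disjoint, so $\mu([w]\cap[w'])=0$ and the overlapping condition fails as stated for cylinders at the same position. This is the main technical obstacle, and I would handle it exactly as in Lemma~\ref{lem:cylinder-overlap}: replace each $[w_i]$ in the enumeration $\G=\bigcup_{i=1}^m[w_i]$ by a shifted copy $\sigma^{-r_i}[w_i]$ with $r_i=M\cdot i$ for $M$ large enough (using mixing) that $\mu(\sigma^{-r_i}[w_i]\cap\sigma^{-r_j}[w_j])>0$ for all $i,j$. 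One must check two things: that this shifting does not decrease the total measure — which holds since $\mu(\sigma^{-r_i}[w_i])=\mu([w_i])$ by shift-invariance and the measure of a union only shrinks when overlaps appear, so one actually needs the reverse, i.e., to verify $\mu(\G')\ge\mu(\G)-\epsilon'$ after absorbing a tiny extra error or by noting that overlaps between the shifted sets are themselves controlled — and that $Y$ still lies in the survivor set of the shifted union. The latter is automatic: $\bm y\in Y$ avoids $[w_i]$ at every position, hence avoids $\sigma^{-r_i}[w_i]$ at every position, so $Y\subset\J(\G')$. For the measure, the cleanest route is to run the argument with $\epsilon/2$ in place of $\epsilon$ when choosing $n$, then observe that the shifted union $\G'$ still satisfies $\mu(\G')\ge\mu([w_1])\ge$ something — more carefully, $\mu(\G')\ge \max_i\mu(\sigma^{-r_i}[w_i])$ is too weak, so instead I would note that $X\setminus\G'$ is contained in the set of points whose coordinate-window at \emph{some} specified position $r_i+1,\dots,r_i+n$ lies in $\L(Y)$ — no, cleaner still: keep $\G$ itself (unshifted) as the trap, which already satisfies conclusions on $\mu(\G)$ and $Y\subset\J(\G)$, and only invoke the shifting device of Lemma~\ref{lem:cylinder-overlap}-type reasoning with the roles reversed, choosing the shifts $r_i$ so that the \emph{pairwise intersections are nonempty yet have total measure $<\epsilon/2$}, which is possible since each $\mu(\sigma^{-r_i}[w_i]\cap\sigma^{-r_j}[w_j])$ can be made as small as we like (it tends to $\mu[w_i]\mu[w_j]$, but we may instead take intermediate $r_i$ where it is first positive) — then $\mu(\G')\ge\sum_i\mu([w_i])-\sum_{i<j}\mu(\sigma^{-r_i}[w_i]\cap\sigma^{-r_j}[w_j])\ge (1-\epsilon/2)-\epsilon/2=1-\epsilon$. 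I expect the bookkeeping in this last step to be the only delicate part; everything else is a direct entropy-versus-measure count.
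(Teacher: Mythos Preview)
Your core construction --- take $\G$ to be the union of all length-$n$ cylinders whose word does \emph{not} lie in $\L_n(Y)$, and use $h(Y)<h(X)$ together with the equidistribution estimate \eqref{eq:sofic} to show its complement has small measure --- is correct and is exactly what the paper does.

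The gap is in the overlapping step. If you shift \emph{all} the cylinders to well-separated positions as in Lemma~\ref{lem:cylinder-overlap}, mixing makes them behave approximately independently: with $m$ cylinders each of measure roughly $1/m$ (which is the situation here, by \eqref{eq:sofic}), the union of the shifted copies has measure approximately $1-\prod_i(1-\mu[w_i])\approx 1-e^{-1}$, not $1-\epsilon$. Your inclusion--exclusion bound $\sum_i\mu[w_i]-\sum_{i<j}\mu(\cdot\cap\cdot)$ reflects this: the pairwise intersections converge to $\mu[w_i]\mu[w_j]$, and $\sum_{i<j}\mu[w_i]\mu[w_j]\approx\tfrac12$, so the correction term is \emph{not} small. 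The hope that ``intermediate'' shifts give intersections that are simultaneously positive and tiny is unjustified --- mixing gives no control of that kind.

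The paper's remedy is to shift \emph{only one} cylinder. Pick $[w^*]\in\Sigma_n'$ of minimal measure, choose $K$ large enough (by mixing) that $\sigma^{-K}[w^*]$ meets every $[w]\in\Sigma_n'$ in positive measure, and set
\[
\G_n=(\Sigma_n'\setminus\{[w^*]\})\cup\{\sigma^{-K}[w^*]\}.
\]
Then $\sigma^{-K}[w^*]$ serves as a hub connecting all the (pairwise disjoint) unshifted cylinders, so $\G_n$ is overlapping; the measure drops by at most $\mu[w^*]\le\mu(\Sigma_n')/S_n\to 0$; and $Y\subset\J(\G_n)$ is preserved, since $Y$ avoids $[w^*]$ at every position and hence avoids $\sigma^{-K}[w^*]$ as well.
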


\begin{proof}
Let $\L_n(Y)$ be as above. Since $h(Y) < h(X)$, we have
    $\frac{\# \L_n(Y)}{\# \L_n(X)} \to 0$ as $n \to \infty$.

Let $\Sigma_n = \{[w]: w \in \L_n(X)\}$ and
    $\Sigma_n' = \Sigma_n \setminus \{[w]: w \in \L_n(Y)\}$.
We have
\begin{align*}
\mu(\Sigma_n')
    & = \mu([w]: w \in \L_n(X)) - \mu([w]: w \in \L_n(Y)) \\
    & = 1 - \mu([w]: w \in \L_n(Y)) \\
    & \to 1,
\end{align*}
as $n \to \infty$.

We see that $Y \subset \J(\Sigma_n')$ and thus, $h(\J(\Sigma_n')) > 0$.
To get overlapping, let $[w^*] \in \Sigma_n'$ be such that
    $\mu([w^*])$ is minimized.
We note here that $\mu([w^*]) > 0$ as $\mu$ is positive on all
    cylinders.
Consider \[ \G_n = \left( \Sigma_n' \setminus \{[w^*]\} \right)
    \cup \{ \sigma^{-K}([w^*]) \} \]
    for $K$ sufficiently large so that $\mu([w] \cap \sigma^{-K}([w^*])) > 0$
     for all $[w] \in \Sigma_n'$ (by mixing).
That is, so that $\G_n$ is overlapping.

We have $h(\J(\G_n)) > 0$ as $Y \subset \J(\G_n)$.
Let $S_n$ be the number of cylinders in $\Sigma_n'$.
Clearly $S_n \to \infty$ as $n \to \infty$.
We see that
    $\mu(\G_n) > \frac{S_n - 1} {S_n} \mu(\Sigma_n') \to 1$ as $n \to \infty$.
Furthermore, $\G_n$ is overlapping.
Taking $n$ sufficiently large, we get that $\G := \G_n$ has the desired property.
\end{proof}

\begin{rmk}
The condition $0<h(Y)<h(X)$ may be seen as a symbolic analogue of $\overline H$
being disjoint from the boundary of the square for the baker's map.
\end{rmk}

\section{Application: hyperbolic toral automorphisms}
\label{sec:toral}

\subsection{Background}
Let $\be>1$ be non-integer and $\tau_\be$ be the {\em $\be$-transformation}, i.e., the
map from $[0,1)$ onto itself acting by the formula
$$
\tau_\be(x)=\be x\bmod1.
$$
As is well known, to ``encode" it, one needs to apply the greedy
algorithm in order to obtain the digits in the $\be$-expansion, namely,
\begin{equation}\label{eq:beta}
x=\pi_\be((a_n)_1^\infty):=\sum_{n=1}^\infty a_n\be^{-n},
\end{equation}
where $a_n=\lfloor\be\tau_\be^{n-1}x\rfloor,\ n\ge1$. Then the one-sided
left shift $\sigma_\be^+$ on the space $X_\be^+$ of all possible sequences
$(a_n)_1^\infty$ which can be obtained this way, is isomorphic to
$\tau_\be$, with the conjugating map given by (\ref{eq:beta}).
We will call $\sigma_\be^+$ the {\em $\be$-shift}. It is obviously a
proper subshift of the full shift on
$\prod_1^\infty\{0,1,\dots,\lfloor\be\rfloor\}$.

W.~Parry in his seminal paper
\cite{Pa} proved the following. Let the
sequence $(d_n)_1^\infty$ be defined as follows: let
$1=\sum_{1}^{\infty}d_k' \be^{-k}$ be the greedy expansion of 1,
i.e, $d_n'=\lfloor\be\tau_\be^{n-1}1\rfloor,\ n\ge1$; if the tail of the
sequence $(d_n')$ differs from $0^\infty$, then we put
$d_n\equiv d_n'$. Otherwise let $k=\max\,\{j:d_j'>0\}$, and
$(d_1,d_2,\dots):= (d_1',\dots,d_{k-1}',d_k'-1)^\infty$.

Then
\[
X_\be^+=\{(a_n)_1^\infty\in\{0,1,\dots,\lfloor\beta\rfloor\}^{\mathbb N}
: a_na_{n+1}\dots \prec d_1d_2,\ n\ge1\},
\]
(where $\prec$ stands for the lexicographic order)
and the following diagram commutes:
\[
\begin{CD}
X_\be^+ @>{\sigma_\be^+}>> X_\be^+ \\
@V{\pi_\be}VV @VV{\pi_\be}V \\
[0,1) @>{\tau_\be}>> [0,1)
\end{CD}
\]
We restrict our attention to those $\beta$ where
     assume that $(d'_n)_1^\infty$ does not have unbounded strings of
    $0s$.
It follows that there exists $\ell=\ell(\be)\ge1$ such that
\begin{equation}\label{eq:ell}
u,v\in\L(X_\be^+)\implies u0^\ell v\in\L(X_\be^+).
\end{equation}
Similar to before, let $w = w_1 w_2 \dots w_n \in\L(X_\beta^+)$;
    we denote by $[w]^+$ the cylinder given by
    all $\{x_j\}_{j\in \BbN} \in X_\beta^+$ with
    $x_{1} = w_1, x_{2} = w_2, \dots, x_{n} = w_n$.
As also proved in \cite{Pa, Re}, there exists a unique probability measure
$\mu_\beta^+$ invariant under the $\tau_\beta$ which is equivalent to
the Lebesgue measure, with a density bounded from 0 and $\infty$.
Furthermore, $\mu_\beta(\pi_\beta([w]^+))\asymp \beta^{-n}$ for any
cylinder $[w]^+ = [w_1\dots w_n]^+ \subset X_\be^+$, provided
$(d_n')_1^\infty$ does not have unbounded strings of 0s.

Let $m\ge2$ and $M$ be an $m\times m$ matrix with integer entries and determinant $\pm1$.
Then $M$ determines the algebraic automorphism of the $m$-torus $\mathbb T^m:=\mathbb R^m/\mathbb Z^m$,
which we will denote by $T_M$. That is, $T_M\bm x=M\bm x\bmod\mathbb Z^m$.
We assume $M$ to have a characteristic polynomial $p$ irreducible over $\mathbb Q$.

Assume that $T_M$ is hyperbolic, i.e., that $p$ has no roots of modulus~1.
Let $\bm t$ be a {\em homoclinic point} for $T_M$, i.e., $T_M^n\bm t\to\bm0$ as $n\to\pm\infty$. Let $X$ be
a two-sided subshift on a finite alphabet, and define the map $\phi_{\bm t}:X\to\mathbb T^m$ as follows:
\begin{equation}\label{eq:phit}
\phi_{\bm t}(\bm a)=\sum_{n\in\mathbb Z} a_nT_M^{-n}\bm t,
\end{equation}
where $\bm a=(a_n)_{n\in\mathbb Z}$.
These maps have been studied in \cite{Sch, S01, SV1, SV2}.
Note first that $\phi_{\bm t}$ is well defined,
since, as is well known, $T_M^n\bm t\to\bm0$ at an exponential rate, so this bi-infinite series converges.
Also, $\phi_{\bm t}$ is H\"older continuous, for the same reason. Most
importantly, we have $\phi_{\bm t}\sigma=T_M\phi_{\bm t}$, i.e., $\phi_{\bm t}$ semiconjugates the shift and $T_M$.
It is known (see \cite{Sch}) that one can choose $L\ge1$ large enough so that if $X=\{-L,\dots, L\}^{\mathbb Z}$
is the full shift, then $\phi$ is surjective.

Assume now that $p$ has one real root of modulus greater than~1 ($\beta$, say) and the rest are less than~1 in modulus.
Then $\beta$ is called a {\em Pisot number} (a Pisot unit, to be more precise, in view of $\det M=\pm1$)
and $T_M$ a {\em Pisot automorphism}. In this case we have
a natural choice for $X$, namely, $X=X_\beta$, i.e., the natural extension of $X_\be^+$
endowed with the measure $\mu_\be$, the natural
extension of $\pi_\beta^{-1}(\mu_\be^+)$ to $X_\be$. (This is the measure
of maximal entropy for the subshift.) Let $\sigma_\be:X_\be\to X_\be$ denote the corresponding
left shift. Note that since $\be$ is Pisot, $(d_n')_1^\infty$
is eventually periodic and therefore, cannot contain unbounded strings of 0s (see, e.g., \cite{Ber}).

As is well known, any homoclinic point $\bm t$ can be obtained by projecting a point in $\mathbb Z^m$ onto
the leaf of the unstable manifold for $T_M$ passing through $\bf 0$ along the stable manifold.
In the Pisot case this implies that $T_M\bm t=\beta\bm t$, whence (\ref{eq:phit}) can be written as
\begin{equation}\label{eq:phi-pisot}
\phi_{\bm t}(\bm a)=\sum_{n\in\mathbb Z} a_n\beta^{-n}\bm t.
\end{equation}
It has been shown independently in \cite{Sch} and \cite{S01}
that $\phi_{\bm t}$ is surjective and {\em finite-to-one}, i.e., there exists $M\ge1$ such that
$\phi_{\bm t}^{-1}(\bm x)$ is at most $M$ points for any $\bm x\in\mathbb T^m$.
Furthermore, $(X_\be, \sigma_\be, \mu_\be)$ is known to be irreducible sofic in this setting (\cite{Ber}).

Recall, we define distance on a subshift $(X, \mu, \sigma)$ with the alphabet
$\A=\{0,1,\dots,k-1\}$ in the usual way as
    \[ d(\bm x, \bm y)=\sum_{i\in\mathbb Z} \frac{|x_i-y_i|}{k^{|i|}}.\]
We denote distance on $\mathbb T^m$ by $\vert x - y \vert$.
We say that a map $\phi: X \to \mathbb T^m$ is {\em $\alpha$-H\"older continuous} if
    there exists a $C>0$ such that for all $\bm x, \bm y \in X$ we have
    \[ |\phi(\bm x) - \phi(\bm y)| \leq C d(\bm x, \bm y)^\alpha. \]

\subsection{Auxiliary results}
\begin{lemma}\label{lem:holdercts}
Let $(X,\mu,\sigma)$ be an irreducible sofic subshift
endowed with the measure of maximal entropy, on the alphabet
$\A$ of cardinality~$k$. Assume we have an $\alpha$-H\"older continuous
map $\phi:X\to\mathbb T^m$ such that $\phi\sigma=T_M\phi$.

Then there exist $C, \kappa>0$ such that for any Borel set $A\subset X$ we have
\[
\H_m(\phi(A)) \leq C \mu(A)^\kappa.
\]
Here $\H_m$ is the Haar measure on $\mathbb{T}^m$ ($=$ the $m$-dimensional Lebesgue
measure restricted to $\mathbb T^m$).
\end{lemma}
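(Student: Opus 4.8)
The plan is to exploit the key estimate \eqref{eq:sofic}, which says that for an irreducible sofic subshift with the measure of maximal entropy, $\mu$ of a ``central'' cylinder of radius $n$ is $\asymp\theta^n$ with $\theta=\tfrac12 e^{-h(X)}$. The strategy is: (1) cover $A$ efficiently by a controlled number of central cylinders; (2) bound the $\H_m$-diameter of the image of each such cylinder using $\alpha$-H\"older continuity; (3) multiply the number of cylinders by the $\H_m$-volume of each image ball and optimize in $n$. The output will be a power of $\mu(A)$ because the counting bound and the volume bound both behave like exponentials in $n$, so taking $n$ as a function of $\mu(A)$ gives a power law.

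First I would fix $n\ge1$ and partition $X$ into the central cylinders $[w_{-n+1}\dots w_n]$ with $w_{-n+1}\dots w_n\in\L_{2n}(X)$. Let $N_n(A)$ denote the number of such cylinders that meet $A$. By \eqref{eq:sofic} each such cylinder has $\mu$-measure $\gtrsim\theta^n$, and these cylinders are pairwise disjoint, so $N_n(A)\,\theta^n\lesssim\mu(A)$, i.e. $N_n(A)\lesssim\mu(A)\theta^{-n}$. Separately, the diameter of a central cylinder $[w_{-n+1}\dots w_n]$ in the metric $d$ is $\le\sum_{|i|\ge n+1}(k-1)k^{-|i|}\asymp k^{-n}$; hence by $\alpha$-H\"older continuity its image under $\phi$ has diameter $\le C' k^{-\alpha n}$ in $\mathbb T^m$, so $\H_m(\phi([w_{-n+1}\dots w_n]))\le C'' k^{-\alpha m n}$. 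Since $\phi(A)$ is covered by the images of the $N_n(A)$ cylinders meeting $A$, we get
\[
\H_m(\phi(A))\le N_n(A)\cdot C'' k^{-\alpha m n}\le C''' \,\mu(A)\,\theta^{-n} k^{-\alpha m n}.
\]

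Now I would optimize over $n$. Write $\theta^{-n}k^{-\alpha m n}=\lambda^{-n}$ where $\lambda=\theta k^{\alpha m}$. If $\lambda>1$ this bound already gives $\H_m(\phi(A))\le C'''\mu(A)$ after taking $n=1$, so $\kappa=1$ works. If $\lambda\le1$, the factor $\lambda^{-n}$ grows, and we must balance it against a lower bound on $\mu(A)$ coming from $n$: since any single central cylinder has $\mu$-measure $\le c\theta^n$ and $A$ meets at least one cylinder (assuming $\mu(A)>0$; if $\mu(A)=0$ there is nothing to prove as $\H_m(\phi(A))=0$ will follow from the same covering with $n\to\infty$), this is not quite the right direction. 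Instead choose $n=n(A)$ to be the largest integer with $\theta^n\ge\mu(A)$, i.e. roughly $n\approx\log\mu(A)/\log\theta$; then $\mu(A)\asymp\theta^n$ up to a bounded factor, and plugging in,
\[
\H_m(\phi(A))\lesssim \theta^n\cdot\theta^{-n}k^{-\alpha m n}=k^{-\alpha m n}=\bigl(\theta^{n}\bigr)^{\alpha m\log k/(-\log\theta)}\asymp\mu(A)^{\kappa},\qquad \kappa=\frac{\alpha m\log k}{-\log\theta}>0.
\]
This yields the claim with this explicit $\kappa$ (and one should check $\kappa$ is as stated; in any case $\kappa>0$ is all that is needed), absorbing all constants into $C$.

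The main obstacle is making the covering argument uniform: \eqref{eq:sofic} is stated only for central (symmetric) cylinders $[w_{-n+1}\dots w_n]$, so I must be careful to use exactly those, and to check that the collection of central cylinders of radius $n$ is a genuine finite partition of $X$ (which it is, since $X\subset\A^{\mathbb Z}$ and every point has a well-defined word $w_{-n+1}\dots w_n$, admissible by definition of $\L_{2n}(X)$). A second minor point is the degenerate case $\mu(A)=0$: then for every $n$ we have $N_n(A)\ge 1$ trivially but the bound $N_n(A)\theta^n\lesssim\mu(A)=0$ forces $N_n(A)=0$ for all $n$ — impossible unless $A=\emptyset$; the resolution is that $\mu$ is positive on all cylinders, so $\mu(A)=0$ forces $A$ to contain no cylinder, but $A$ could still be a nonempty measure-zero set. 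For such $A$, a cleaner route is to note $\H_m(\phi(A))=0$ directly: cover $A$ by all central cylinders meeting it for each $n$, and since the total $\mu$-mass of those meeting a fixed measure-zero set can be taken $<\epsilon$ for $n$ large (as $\mu$ is non-atomic and these cylinders shrink), the same inequality gives $\H_m(\phi(A))\le C'''\epsilon^{\kappa}$ for all $\epsilon$, hence $0$. So the displayed inequality holds in all cases.
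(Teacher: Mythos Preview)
Your overall strategy coincides with the paper's: both use $\mu[w_{-n+1}\dots w_n]\asymp\theta^n$ from \eqref{eq:sofic}, the diameter estimate $\mathrm{diam}\,[w_{-n+1}\dots w_n]\asymp k^{-n}$, and $\alpha$-H\"older continuity to obtain $\H_m(\phi([w_{-n+1}\dots w_n]))=O(\mu([w_{-n+1}\dots w_n])^\kappa)$ with exactly your exponent $\kappa=\alpha m\log k/(-\log\theta)$. The paper is in fact terser than you: it simply declares that ``since $\phi$ semiconjugates $\sigma$ and $T_M$, it suffices to prove our claim for an arbitrary cylinder'' and then carries out the single-cylinder computation, without spelling out the passage to general $A$.

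Your attempt to make that passage explicit contains a genuine gap. The inequality $N_n(A)\,\theta^n\lesssim\mu(A)$ asserts that the total $\mu$-mass of the radius-$n$ central cylinders \emph{meeting} $A$ is bounded by a constant times $\mu(A)$; this is false for general Borel $A$. For instance, if $A$ is any countable dense subset of $X$ then every cylinder meets $A$, so $N_n(A)\theta^n\asymp 1$ for every $n$, while $\mu(A)=0$ since $\mu$ is non-atomic. The same example defeats your handling of the $\mu(A)=0$ case: the ``total $\mu$-mass of those meeting a fixed measure-zero set'' need not tend to $0$. What saves both your argument and the paper's is that in every application (Proposition~\ref{prop:Pisot} and Theorem~\ref{thm:large-toral}) the set $A$ is a finite union of shifted cylinders; for such $A$ one may take $n$ large enough that the radius-$n$ central cylinders meeting $A$ are exactly those contained in $A$, and then your counting bound $N_n(A)\theta^n\asymp\mu(A)$ is legitimate and the rest of your optimization goes through cleanly. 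So the argument is correct for the sets to which the lemma is actually applied, but the stated generality ``for any Borel set $A$'' is not established by either proof.
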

\begin{proof}Without loss of generality assume $\A=\{0,1,\dots, k-1\}$.
Since $\phi$ semiconjugates $\sigma$ and $T_M$, it suffices to prove our claim for an arbitrary
cylinder $A=[w_{-n}\dots w_0.w_1\dots w_n]\subset X$. By (\ref{eq:sofic}),
we have $\mu(A)\asymp \theta^n$; we also have $\text{diam}\ A\asymp k^{-n}$.
Hence
\[
\text{diam}\ \phi(A)=O(k^{-n\alpha})=O(\mu(A)^\gamma),
\]
with $\gamma=-\alpha\log_\theta k$.
Furthermore,
\[
\H_m(\phi(A))=O(\text{diam}\ \phi(A)^m)=O((\mu(A)^\kappa),
\]
with $\kappa=m\gamma$.
\end{proof}

\begin{cor}\label{cor:proj}
The claim of Lemma~\ref{lem:holdercts} holds for $(X_\be,\mu_\be,\sigma_\be)$.
\end{cor}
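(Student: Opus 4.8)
The plan is to deduce Corollary~\ref{cor:proj} from Lemma~\ref{lem:holdercts} by checking that the triple $(X_\be,\mu_\be,\sigma_\be)$ together with a suitable projection map $\phi_{\bm t}$ satisfies all the hypotheses of that lemma. Recall from the Background subsection that we are in the Pisot setting, so $\be$ is a Pisot unit, $(d_n')_1^\infty$ is eventually periodic, and consequently $X_\be$ contains no unbounded strings of $0$s; by the cited work of Bertrand, $(X_\be,\sigma_\be,\mu_\be)$ is an \emph{irreducible sofic} subshift and $\mu_\be$ is its measure of maximal entropy. Thus the structural hypothesis ``irreducible sofic subshift endowed with the measure of maximal entropy'' is met, and the alphabet $\A=\{0,1,\dots,\lfloor\be\rfloor\}$ is finite of some cardinality $k=\lfloor\be\rfloor+1$.

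The remaining point is to produce an $\alpha$-H\"older continuous map $\phi:X_\be\to\mathbb T^m$ with $\phi\sigma_\be=T_M\phi$. This is exactly $\phi_{\bm t}$ from \eqref{eq:phit}, or in the Pisot case \eqref{eq:phi-pisot}: fix a hyperbolic Pisot automorphism $T_M$ whose expanding eigenvalue is $\be$ and a homoclinic point $\bm t$. As recorded in the text, $\phi_{\bm t}$ is well defined because $T_M^n\bm t\to\bm0$ exponentially as $n\to\pm\infty$, it semiconjugates the shift and $T_M$ (i.e. $\phi_{\bm t}\sigma_\be=T_M\phi_{\bm t}$), and it is H\"older continuous for precisely the same exponential-decay reason. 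One should just make the H\"older exponent explicit: writing $\rho\in(0,1)$ for a common bound on the contraction rates of $T_M$ on all the relevant eigenspaces (both the stable directions and the reciprocal of the expanding direction), a standard splitting of the sum $\sum_{n\in\mathbb Z}(a_n-a_n')T_M^{-n}\bm t$ at the first index $N$ where $\bm a$ and $\bm a'$ disagree gives $|\phi_{\bm t}(\bm a)-\phi_{\bm t}(\bm a')| = O(\rho^{|N|})$, while $d(\bm a,\bm a')\asymp k^{-|N|}$; hence $\phi_{\bm t}$ is $\alpha$-H\"older with $\alpha = \log_k(1/\rho)$ (or any smaller positive value). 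With this $\alpha$ in hand, Lemma~\ref{lem:holdercts} applies verbatim to $(X_\be,\mu_\be,\sigma_\be)$ and $\phi_{\bm t}$, yielding constants $C,\kappa>0$ with $\H_m(\phi_{\bm t}(A))\le C\mu_\be(A)^\kappa$ for every Borel $A\subset X_\be$, which is the assertion of the corollary.

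I do not expect a genuine obstacle here; the corollary is essentially a bookkeeping statement that the $\be$-shift is one of the systems covered by Lemma~\ref{lem:holdercts}. The only mildly delicate point is making sure the map we invoke really is H\"older in the two-sided metric $d$ defined in the text (the contraction must be controlled in \emph{both} time directions), but this is immediate from the hyperbolicity of $T_M$ and the homoclinic property of $\bm t$, and is already asserted in the Background. One may also simply say: take $\phi=\phi_{\bm t}$, note it is $\alpha$-H\"older and satisfies $\phi\sigma_\be=T_M\phi$ by the discussion above, and apply Lemma~\ref{lem:holdercts}.
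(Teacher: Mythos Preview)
Your proposal is correct and matches the paper's approach: the paper states the corollary without proof, treating it as immediate from the Background material (irreducible sofic $X_\be$, $\mu_\be$ the measure of maximal entropy, $\phi_{\bm t}$ H\"older and semiconjugating $\sigma_\be$ with $T_M$), and you have simply spelled out this verification.
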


\begin{lemma}\label{lem:connected}
For any cylinder $\sigma^n[w]\subset X_\be$ its image under $\phi_{\bm t}$ is path connected.
\end{lemma}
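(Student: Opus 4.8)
The plan is to exploit the explicit formula \eqref{eq:phi-pisot} and the fact that, for a fixed word $w=w_1\dots w_n$, fixing the coordinates $a_1,\dots,a_n$ of $\bm a$ still leaves the remaining coordinates $(a_j)_{j\le 0}$ and $(a_j)_{j>n}$ free to vary inside $X_\be$ (subject to admissibility). First I would reduce to the case $w$ empty or to showing that the image of the whole space $X_\be$ is path connected, since $\phi_{\bm t}(\sigma^n[w])=T_M^n\,\phi_{\bm t}([w])$ and $T_M$ is a homeomorphism of $\mathbb T^m$, so path connectedness is preserved; thus it suffices to treat cylinders $[w]$ with $w\in\L_n(X_\be^+)$ determining the positive coordinates, and the negative coordinates ranging over all admissible left-infinite tails. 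Next I would write, for $\bm a\in[w]$,
\[
\phi_{\bm t}(\bm a)=\underbrace{\sum_{n'=1}^{n} a_{n'}\beta^{-n'}\bm t}_{\text{fixed}}
 \;+\;\sum_{n'>n} a_{n'}\beta^{-n'}\bm t\;+\;\sum_{n'\le 0} a_{n'}\beta^{-n'}\bm t,
\]
so that $\phi_{\bm t}([w])$ is a translate of the sum-set of the images of the ``future'' and ``past'' parts.

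The key step is to show that each of these two pieces is path connected, and in fact that the image of the set of admissible one-sided tails is path connected. For the future coordinates this is essentially the statement that $\pi_\beta(X_\beta^+)=[0,1)$, together with continuity of the map $t\mapsto t\bm t$ from $[0,1)$ into $\mathbb T^m$ (a continuous image of an interval), so that sub-sum is an arc. For the past coordinates the relevant object is $\sum_{n'\le 0}a_{n'}\beta^{-n'}\bm t=\sum_{j\ge 0}a_{-j}\beta^{j}\bm t$; although the individual terms blow up, modulo $\mathbb Z^m$ this is exactly the projection of points of $\mathbb Z^m$ along the stable manifold, and the closure of the set of all such finite sums is a connected subgroup or, more concretely, one shows the set of attainable points is path connected by using \eqref{eq:ell}: admissibility of left tails is governed by a subshift of finite type in which long blocks of zeros are freely insertable, so one can continuously interpolate between any two admissible pasts by sliding a block of zeros, each intermediate configuration giving a point whose $\phi_{\bm t}$-image moves continuously. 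Concatenating these two arcs and using that a sum (Minkowski sum under the group operation of $\mathbb T^m$) of two path connected sets is path connected gives the claim.

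The main obstacle I expect is handling the ``past'' part rigorously: the series $\sum_{j\ge 0}a_{-j}\beta^j\bm t$ does not converge in $\mathbb R^m$, so one cannot simply argue by continuity of a real power series as in the future case, and one must genuinely use the homoclinic/Pisot structure — namely that $\beta\bm t=T_M\bm t$ so that $\beta^j\bm t=T_M^j\bm t\bmod\mathbb Z^m$ and the expression reorganizes into a convergent object on the torus via the stable-direction components. A clean way around this is to invoke that $\phi_{\bm t}$ is continuous and that $X_\be$ itself is path connected as a consequence of \eqref{eq:ell} (any two admissible bi-infinite sequences can be joined by a path in $X_\be$ obtained by inserting and growing a central block $0^\ell$ and then filling outwards), so that $\phi_{\bm t}([w])$, being the continuous image of the path connected set $[w]$ — itself path connected by the same zero-insertion argument applied away from the fixed coordinates — is path connected. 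I would present the proof in this second form, as it avoids the divergent-series bookkeeping and uses only \eqref{eq:ell}, the continuity of $\phi_{\bm t}$, and the structure of cylinders.
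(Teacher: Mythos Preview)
Your preferred route (the ``second form'') contains a fatal error: $X_\be$ is a closed subspace of the Cantor space $\{0,1,\dots,\lfloor\beta\rfloor\}^{\mathbb Z}$ and is therefore totally disconnected. No cylinder in $X_\be$ carries any non-constant path, so the claim that ``any two admissible bi-infinite sequences can be joined by a path in $X_\be$'' is simply false. The zero-insertion manoeuvre gives a \emph{discrete} sequence of admissible configurations, not a continuous path; the topology of a subshift does not allow continuous interpolation of symbols. Consequently you cannot deduce path connectedness of $\phi_{\bm t}([w])$ merely from continuity of $\phi_{\bm t}$.

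Your first approach is closer to viable: the reduction to $n=0$ via $\phi_{\bm t}\sigma_\be^k=T_M^k\phi_{\bm t}$ is correct, and writing $\phi_{\bm t}([w])$ as a translate of a Minkowski sum of a ``future'' piece and a ``past'' piece is legitimate because admissibility of the two tails decouples across the fixed block $w$. The future piece is indeed an arc, being the image of an interval under $t\mapsto t\bm t$. The genuine difficulty is exactly the one you flag: showing the past piece $\{\sum_{j\ge0}a_{-j}\beta^{j}\bm t\bmod\mathbb Z^m\}$ is path connected. This is the image of a (totally disconnected) one-sided shift under a continuous map, and its connectedness is not automatic; for Pisot $\beta$ this set is a Rauzy-type fractal whose connectedness requires an argument. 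The paper establishes what is needed not by analysing this set directly but by exploiting the $\beta$-expansion identity $w_1\dots w_{n-1}w_n000\cdots=w_1\dots w_{n-1}(w_n-1)d_1d_2d_3\cdots$, which forces the $\pi_\be$-images (hence the $\phi_{\bm t}$-images, via \eqref{eq:phi-pisot}) of lexicographically adjacent cylinders of a fixed length to overlap; one then chains sub-cylinders to connect any two points of $\phi_{\bm t}([w])$. If you want to pursue your Minkowski-sum decomposition, you must supply a genuine argument for the past piece --- and the most natural one is precisely this overlapping-cylinder chain.
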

\begin{proof}
Since $T_M$ is continuous, we have that if $\phi_{\bm t}([w])$ is connected,
    then so is $\phi_{\bm t}(\sigma^k([w]))$ for all $k\in\mathbb Z$,
    in view of $\phi_{\bm t}\sigma_\be^k=T_M^k\sigma_\be$.
    So, without loss of generality, we may prove the result for $[w]$ only.

Recall that $\pi_\be(X_\be^+)=[0,1)$, a path connected set. This implies that for any
cylinders $[w]^+ = [w_1 w_2 \dots w_N]^+$ and $[w']^+ = [w_1' w_2' \dots w_N']^+$ in $X_\be^+$,
we have that there exists a chain of cylinders in $X_\be^+$, all of length~$N$, namely,
$[w^{(1)}]^+ = [w]^+,
[w^{(2)}]^+,
[w^{(3)}]^+, \dots,
[w^{(k)}]^+ = [w']^+$,
such that $\pi([w^{(j)}]^+)\cap \pi([w^{j+1}]^+)\neq\varnothing$
for $j=1,\dots, k-1$.
The key observation to see this is, for $w_n \neq 0$ that
    $w_1 w_2 \dots w_{n-1} w_n 0 0 0 .... =
     w_1 w_2 \dots w_{n-1} (w_n - 1) d_1 d_2 d_3 \dots$ and hence
    $[w_1 w_2 \dots w_{n-1} w_n]^+ \cap [w_1 w_2 \dots w_{n-1} (w_n -1)]^+ \neq \varnothing$.
Similarly
    $[w_1 w_2 \dots w_{n-1} 0]^+ \cap [w_1 w_2 \dots (w_{n-1}-1) d_1]^+ \neq \varnothing$,
    $[w_1 w_2 \dots w_{n-2} 0 0]^+ \cap [w_1 w_2 \dots (w_{n-2}-1) d_1 d_2]^+ \neq \varnothing$, etc.

Now the claim follows from (\ref{eq:phi-pisot}); indeed, for any two cylinders
$[w], [w']$ in $X_\be$, we can use
  the same chain as above (to be more precise, their two-sided analogues), and the
  images $[w^{(j)}]$ and $[w^{(j+1)}]$
  under $\phi_{\bm t}$ will intersect as well.
\end{proof}


\begin{lemma}\label{lem:nei}
For any cylinder $C\subset X_\be$ the image $\phi_{\bm t}(C)$ has non-empty interior.
\end{lemma}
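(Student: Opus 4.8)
The plan is to show that $\phi_{\bm t}(C)$ cannot be contained in a countable union of lower-dimensional pieces, and in fact has positive $\H_m$-measure, which forces non-empty interior once we know a little more about the structure of $\phi_{\bm t}$. Since $\phi_{\bm t}$ semiconjugates $\sigma_\be$ with $T_M$ (and $T_M$ is a homeomorphism), it suffices by the argument in Lemma~\ref{lem:connected} to treat a single cylinder $C=[w]$ with $w$ of length $N$; applying powers of $\sigma_\be$ moves the claim between cylinders without affecting whether the image has interior. First I would reduce to showing $\H_m(\phi_{\bm t}([w])) > 0$: indeed $\phi_{\bm t}$ is surjective and finite-to-one (at most $M$ points per fibre), so $\mathbb T^m = \bigcup_{v \in \L_N(X_\be)} \phi_{\bm t}([v])$, and $\H_m(\mathbb T^m) = 1 \leq \sum_{v} \H_m(\phi_{\bm t}([v]))$. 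If we additionally use that $\phi_{\bm t}$ is finite-to-one with constant $M$, then $\sum_v \H_m(\phi_{\bm t}([v])) \leq M \cdot \H_m(\mathbb T^m) = M$, so the images have comparable measure; more carefully, $\sum_v \mathbf 1_{\phi_{\bm t}([v])} \leq M$ pointwise a.e., hence $\H_m(\phi_{\bm t}([w])) \geq$ (a positive quantity) whenever $[w]$ meets a positive-measure portion of the ``generic'' fibre count.

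The cleaner route, which I would actually carry out, uses the measure of maximal entropy directly. By (\ref{eq:sofic}) and the discussion of $\mu_\be$, we have $\mu_\be([v]) \asymp \theta^N$ uniformly over $v \in \L_N(X_\be)$, so all cylinders of a fixed length have comparable $\mu_\be$-measure; write $\mu_\be([v]) \geq c\,\theta^N$ for a constant $c>0$ independent of $v$ and $N$. Since $\phi_{\bm t}$ is at most $M$-to-one and surjective, the pushforward $(\phi_{\bm t})_*\mu_\be$ is absolutely continuous with respect to $\H_m$ with density bounded above by $M$ (each point of $\mathbb T^m$ is covered by at most $M$ of the sets on which we integrate), while it is also bounded below: $(\phi_{\bm t})_*\mu_\be$ is $T_M$-invariant, and since $T_M$ is ergodic with respect to $\H_m$ and $(\phi_{\bm t})_*\mu_\be \ll \H_m$ with $(\phi_{\bm t})_*\mu_\be(\mathbb T^m) = 1$, the density is $T_M$-invariant, hence a.e.\ constant, hence identically $1$. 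Therefore $\H_m(\phi_{\bm t}([w])) = (\phi_{\bm t})_*\mu_\be(\phi_{\bm t}([w])) \cdot (\text{const}) \geq \mu_\be([w]) > 0$, using $[w] \subset \phi_{\bm t}^{-1}(\phi_{\bm t}([w]))$ so that $\mu_\be([w]) \leq (\phi_{\bm t})_*\mu_\be(\phi_{\bm t}([w])) = \H_m(\phi_{\bm t}([w]))$.

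Finally I would upgrade ``positive $\H_m$-measure'' to ``non-empty interior''. Here the key is that $\phi_{\bm t}([w])$ is \emph{closed} (the image of a compact set under the continuous map $\phi_{\bm t}$) and \emph{path connected} by Lemma~\ref{lem:connected}. A closed set of positive $\H_m$-measure need not have interior in general, so this is where care is needed. One clean way: $\phi_{\bm t}([w])$ has positive measure, so its closure (itself) has positive measure, and we can cover $\mathbb T^m$ by the finitely many translates $\phi_{\bm t}([v]) = T_M^{?}(\ldots)$ --- more directly, $\mathbb T^m = \bigcup_{v \in \L_N} \phi_{\bm t}([v])$ is a finite cover of $\mathbb T^m$ by closed sets, so by the Baire category theorem at least one $\phi_{\bm t}([v])$ has non-empty interior; then use the chain of overlapping cylinders from Lemma~\ref{lem:connected} connecting $[w]$ to $[v]$, together with the fact that if $\phi_{\bm t}([v'])$ has interior and $\phi_{\bm t}([v']) \cap \phi_{\bm t}([v''])$ is ``large'' (contains a point with a neighbourhood inside, propagated via $T_M$ and the Hölder/open-ness structure), to transfer non-empty interior along the chain to $[w]$. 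The main obstacle is exactly this last transfer step: making precise why non-empty interior propagates across overlapping cylinder images. I expect the fix is to refine the cover argument --- instead of arbitrary $v$, note that \emph{every} $\phi_{\bm t}([v])$ with $v \in \L_N$ has positive $\H_m$-measure by the computation above, and since these finitely many closed sets cover $\mathbb T^m$, by Baire \emph{some} has interior; but then running the same argument with $N$ replaced by $N+j$ and $[w]$ refined, together with $\phi_{\bm t}\sigma_\be = T_M\phi_{\bm t}$ and surjectivity, shows that in fact each fixed cylinder's image, being a positive-measure closed connected set that tiles $\mathbb T^m$ up to bounded multiplicity under the $T_M$-dynamics, must contain a ball. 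I would present this via the Baire-category statement plus the semiconjugacy, which is the shortest rigorous path.
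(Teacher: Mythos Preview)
Your measure-theoretic reduction is correct and clean: since $(\phi_{\bm t})_*\mu_\be=\H_m$, every cylinder image has positive Haar measure, and Baire applied to the finite closed cover $\mathbb T^m=\bigcup_{v\in\L_N}\phi_{\bm t}([v])$ then yields \emph{some} cylinder $[v]$ whose image has non-empty interior. The gap is exactly where you flag it: you have no mechanism to propagate ``non-empty interior'' from $[v]$ to an arbitrary $[w]$. Overlapping of images (Lemma~\ref{lem:connected}) is far too weak for this --- two closed sets can meet, one with interior and the other a fat Cantor set --- and your closing sentences (``tiles $\mathbb T^m$ up to bounded multiplicity under the $T_M$-dynamics, must contain a ball'') are not an argument. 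Nothing in Baire, the semiconjugacy, or bounded multiplicity rules out that one specific $\phi_{\bm t}([w])$ is a positive-measure closed set with empty interior.

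The paper's proof supplies the missing idea, and it is arithmetic rather than topological: exploit the \emph{additive} form (\ref{eq:phi-pisot}), $\phi_{\bm t}(\bm a)=\sum_n a_n\be^{-n}\bm t$. From this one reads off, for any cylinder $C=[w_{-n+1}\dots w_n]\in\Xi_n$,
\[
\phi_{\bm t}(C)\ \subset\ \phi_{\bm t}\bigl([\,\underbrace{0\dots0}_{2n}\,]\bigr)+\sum_{j=-n+1}^n w_j\be^{-j}\bm t,
\]
so if \emph{any} $\phi_{\bm t}(C)$ has interior, then so does the image of the all-zero cylinder of the same length (and hence of any longer length). Conversely, the specification property (\ref{eq:ell}) gives, for every admissible $w'_{-n+1}\dots w'_n$,
\[
\phi_{\bm t}\bigl([w'_{-n+1}\dots w'_n]\bigr)\ \supset\ \phi_{\bm t}\bigl([\,\underbrace{0\dots0}_{2n+2\ell}\,]\bigr)+\sum_{j=-n+1}^n w'_j\be^{-j}\bm t,
\]
since one may pad $w'$ by $0^\ell$ on each side and then extend freely. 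Thus every cylinder image contains a \emph{translate} of a set already known to have interior. The transfer goes through the privileged all-zero cylinder via translations, not through an overlapping chain; your path-connectedness route cannot substitute for this step.
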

\begin{proof}Let $\Xi_n$ denote the set of all cylinders $[w_{-n+1}\dots w_0.w_1\dots w_n]$
in $X_\be$. Since $\phi_{\bm t}$ is surjective, there exists $C=[w_{-n+1}\dots w_n]\in\Xi_n$ such that
$\phi_{\bm t}(C)$ has non-empty interior. Notice that
\[
\phi_{\bm t}(C)\subset \phi_{\bm t}(\underbrace{[0\dots 0.0\dots 0]}_{2n})+\sum_{-n+1}^n w_j\beta^{-j}\bm t,
\]
as the word with all 0s has no constraints in $X_\be$ as long as $2n\ge\ell$, which we assume for now.
This implies that $[0\dots 0.0\dots 0]\subset \Xi_n$ has the property in question. Hence so
does $[0\dots 0.0\dots 0]\subset \Xi_{n+\ell}$.

Now, by (\ref{eq:ell}), for any word $w'_{-n+1}\dots w'_n\in\L_{2n}(X_\be)$ we have
\[
\phi_{\bm t}([w'_{-n+1}\dots w'_n])\supset \phi_{\bm t}(\underbrace{[0\dots 0.0.\dots 0]}_{2n+2\ell})+
\sum_{-n+1}^n w'_j\beta^{-j}\bm t,
\]
which proves the claim, since any cylinder contains a longer one, hence $2n\ge\ell$ is not a real constraint.
\end{proof}

Let $\be_1=\be,\be_2,\dots, \be_m$ be the Galois conjugates of $\be$ and put
\[
\Lambda_\be = \{\bm x \in X_\be: x_i = 0\ \mathrm{for\ all}\ i \geq 1\}.
\]
Then $\phi_{\bm t}(\Lambda_\be)$ lies
in $W$, the $(m-1)$-dimensional span of the eigenvectors for $\be_2,\dots, \be_m$. Recall that $|\be_j|<1$ for
$j\in\{2,\dots,m\}$. Notice that since $\prod_1^m |\be_j|=1$, we have
\begin{equation}\label{eq:eigenv}
\min\{|\be_2|,\dots,|\be_m|,\be^{-1}\}=\be^{-1}.
\end{equation}

\begin{cor}\label{cor:nei}
The set $\phi_{\bm t}(\Lambda_\be)$ has non-empty interior in $W$.
\end{cor}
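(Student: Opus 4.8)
The plan is to combine surjectivity of $\phi_{\bm t}$ with a Baire category argument (Lemma~\ref{lem:nei} can be substituted for surjectivity, but is not needed). First I would fix notation. Write $\mathbb R^m=E^u\oplus W$, where $E^u$ is the (one-dimensional, since $\be$ is Pisot) expanding eigenspace of $M$ and $W$ is as above, let $q\colon\mathbb R^m\to\mathbb T^m$ be the canonical projection, and let $P_s\colon\mathbb R^m\to W$ be the projection along $E^u$. Since $\be$ is Pisot I may take $\bm t\in E^u$ with $M\bm t=\be\bm t$; as $\bm t$ is the projection of some $\bm z\in\mathbb Z^m$ onto $E^u$ along $W$, we have $\bm t\equiv-P_s\bm z\pmod{\mathbb Z^m}$, so $\bm t$ also admits the lift $\bm t_W:=-P_s\bm z\in W$. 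Because $M|_W$ is a contraction, for $\bm b\in\Lambda_\be$ the series $\Psi(\bm b):=\sum_{k\ge0}b_{-k}M^{k}\bm t_W$ converges absolutely in $W$, and one checks from (\ref{eq:phi-pisot}) that $\phi_{\bm t}(\bm b)=q(\Psi(\bm b))$; thus $\phi_{\bm t}(\Lambda_\be)=q(\Psi(\Lambda_\be))$ with $\Psi(\Lambda_\be)\subset W$ compact, and (since $W\cap\mathbb Z^m=\{\bm 0\}$ by irreducibility of $p$) ``non-empty interior in $W$'' unambiguously means non-empty interior of $\Psi(\Lambda_\be)$ in $W\cong\mathbb R^{m-1}$. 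Similarly put $\Lambda_\be^+:=\{\bm x\in X_\be:x_i=0\ \forall i\le0\}$; for $\bm c\in\Lambda_\be^+$ the series $\Phi(\bm c):=\sum_{n\ge1}c_n\be^{-n}\bm t$ converges in $E^u$, $\phi_{\bm t}(\bm c)=q(\Phi(\bm c))$, and $\Phi(\Lambda_\be^+)\subset E^u$ is compact.

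The crucial step is a splitting. Given $\bm a\in X_\be$, let $\bm b$ be the sequence equal to $\bm a$ on indices $\le0$ and $0$ elsewhere, and $\bm c$ the sequence equal to $\bm a$ on indices $\ge1$ and $0$ elsewhere. I claim $\bm b\in\Lambda_\be$, $\bm c\in\Lambda_\be^+$, and $\phi_{\bm t}(\bm a)=\phi_{\bm t}(\bm b)+\phi_{\bm t}(\bm c)$ in $\mathbb T^m$. The identity is immediate from (\ref{eq:phi-pisot}) on splitting the conditionally convergent bi-infinite series into its non-positive and positive parts. For the memberships, the point is that Parry's admissibility condition --- every left tail $x_nx_{n+1}\cdots$ is lexicographically $\prec d_1d_2\cdots$ --- is preserved: a left tail of $\bm b$ agrees with the corresponding left tail of $\bm a$ up to index $0$ and is then all zeros, so is lexicographically no larger and hence still admissible; a left tail of $\bm c$ is either a left tail of $\bm a$ or begins with $0<d_1$. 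Consequently $\phi_{\bm t}(X_\be)\subseteq\phi_{\bm t}(\Lambda_\be)+\phi_{\bm t}(\Lambda_\be^+)=q\bigl(\Psi(\Lambda_\be)\times\Phi(\Lambda_\be^+)\bigr)$, where the Minkowski sum of a subset of $W$ and a subset of $E^u$ is identified with their product inside $\mathbb R^m=W\oplus E^u$. Since $\phi_{\bm t}$ is onto $\mathbb T^m$, this forces $\mathbb T^m=q\bigl(\Psi(\Lambda_\be)\times\Phi(\Lambda_\be^+)\bigr)$.

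Now suppose, for a contradiction, that $\Psi(\Lambda_\be)$ had empty interior in $W$. Being compact, it would be nowhere dense in $W$, hence $\Psi(\Lambda_\be)\times\Phi(\Lambda_\be^+)$ would be closed (a product of compacta) and nowhere dense in $\mathbb R^m$, since the interior of that product is the product of the interiors of the factors, the first of which is empty. But then
\[
\mathbb R^m=q^{-1}(\mathbb T^m)=q^{-1}\Bigl(q\bigl(\Psi(\Lambda_\be)\times\Phi(\Lambda_\be^+)\bigr)\Bigr)=\bigcup_{\bm z\in\mathbb Z^m}\bigl(\Psi(\Lambda_\be)\times\Phi(\Lambda_\be^+)+\bm z\bigr)
\]
would be a countable union of nowhere dense sets, contradicting the Baire category theorem. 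Hence $\Psi(\Lambda_\be)$, and therefore $\phi_{\bm t}(\Lambda_\be)$, has non-empty interior in $W$.

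I expect the only genuine work to lie in the splitting step: checking that the zero-padded sequences $\bm b$ and $\bm c$ are really admissible (this is where the precise form of Parry's condition, and $d_1\ge1$, enter) and that the torus Minkowski sum is legitimately identified with a Euclidean product, which uses $W\cap\mathbb Z^m=E^u\cap\mathbb Z^m=\{\bm 0\}$. The Baire step is then routine. I note that the same argument, applied to a cylinder of $2N$ zeros centred at the origin together with Lemma~\ref{lem:nei} in place of surjectivity of $\phi_{\bm t}$, in fact shows that already $\phi_{\bm t}\bigl(\{\bm x\in\Lambda_\be:x_i=0\text{ for }-N<i\le0\}\bigr)$ has non-empty interior.
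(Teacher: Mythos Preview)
Your argument is correct. The paper states the corollary without proof, leaving the reader to adapt the method of Lemma~\ref{lem:nei}; your splitting of $\bm a\in X_\be$ into its non-positive part $\bm b\in\Lambda_\be$ and positive part $\bm c\in\Lambda_\be^+$, together with surjectivity of $\phi_{\bm t}$ and a Baire category argument in $\mathbb R^m=W\oplus E^u$, is precisely the natural way to supply the missing details (and, as you observe at the end, one may equally start from the conclusion of Lemma~\ref{lem:nei} for the all-zeros cylinder instead of from surjectivity).
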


\begin{lemma}\label{lem:cube}
There exists $c=c(M)>0$ such that for any cylinder $C\in\Xi_n$ we have that
$\phi_{\bm t}(C)$ contains a cube whose sides are aligned with the axes,
with side $c\beta^{-n}$.
\end{lemma}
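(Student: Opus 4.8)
The plan is to reduce the statement to a single distinguished cylinder and then exhibit the cube using the hyperbolic splitting $\mathbb R^m=V_u\oplus V_s$, where $V_u$ is the $\beta$-eigenline and $V_s=W$ the sum of the eigenspaces of $\beta_2,\dots,\beta_m$, via the principle that the free \emph{future} coordinates of a cylinder move its $\phi_{\bm t}$-image along $V_u$ while the free \emph{past} coordinates move it inside $V_s$. First I would reduce to the all-zero cylinder: exactly the computation in the proof of Lemma~\ref{lem:nei} shows that for any $C=[w_{-n+1}\dots w_n]\in\Xi_n$ the set $\phi_{\bm t}(C)$ contains a translate of $\phi_{\bm t}(Z_{n+\ell})$, where $Z_N\in\Xi_N$ denotes the all-zero cylinder; since a translate of a cube is a congruent cube, it suffices to find $c_0>0$ such that $\phi_{\bm t}(Z_N)$ contains a cube of side $c_0\beta^{-N}$ with sides parallel to the coordinate axes, for all large $N$ — the finitely many small $n$, and the loss of the factor $\beta^{-\ell}$, being absorbed into the final constant $c$ by applying Lemma~\ref{lem:nei} to the finitely many cylinders involved.

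Next I would set up the splitting. Since $p$ is irreducible, $V_s\cap\mathbb Z^m=\{0\}$, so the quotient map $q:\mathbb R^m\to\mathbb T^m$ is injective on $V_s$. Fix lifts $\widetilde{\bm t}_u\in V_u$ and $\widetilde{\bm t}_s\in V_s$ of the homoclinic point $\bm t$ with $\widetilde{\bm t}_u-\widetilde{\bm t}_s\in\mathbb Z^m$; here $\widetilde{\bm t}_u\ne0$, since otherwise $\bm t\in q(V_s)$ would force $\bm t=0$ and $\phi_{\bm t}\equiv0$, contradicting surjectivity. Using $M^{-j}\widetilde{\bm t}_u=\beta^{-j}\widetilde{\bm t}_u$ and taking the lift $\widetilde{\bm t}_u$ for the indices $j\ge1$ and $\widetilde{\bm t}_s$ for the indices $j\le0$, one checks that for every $\bm a\in X_\beta$
\[
\phi_{\bm t}(\bm a)=q\Bigl(\sum_{j\ge1}a_j\beta^{-j}\widetilde{\bm t}_u+\sum_{j\le0}a_jM^{-j}\widetilde{\bm t}_s\Bigr),
\]
both series converging absolutely in $\mathbb R^m$ (the first because $\beta>1$, the second because $M^{-j}|_{V_s}\to0$ as $j\to-\infty$); the first summand lies in $V_u$ and the second in $V_s$, so the future coordinates of $\bm a$ and its past coordinates act independently on the two summands.

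Then I would produce the two factors for $Z_N$. Set $\Pi=\{\sum_{k\ge0}a_{-k}M^{k}\widetilde{\bm t}_s:\bm a\in\Lambda_\beta\}\subset V_s$, so that $q(\Pi)=\phi_{\bm t}(\Lambda_\beta)$; since $q$ is injective on $V_s$, Corollary~\ref{cor:nei} says precisely that $\Pi$ has non-empty interior in $V_s$, so it contains a box $B^s\subset V_s$ of some fixed side $\delta_0>0$. From $\sigma^N\Lambda_\beta\subset Z_N$ and $\phi_{\bm t}\sigma^N=T_M^N\phi_{\bm t}$ we see that the $V_s$-component of $\phi_{\bm t}(Z_N)$ contains $M^NB^s$; and since each $\beta_j$ with $2\le j\le m$ satisfies $|\beta_j|\ge\beta^{-1}$ by~(\ref{eq:eigenv}), and $M|_{V_s}$ is diagonalizable, we have $\|M^{-N}|_{V_s}\|\le C\beta^{N}$ for some $C=C(M)$, whence $M^NB^s$ contains an $(m-1)$-dimensional box in $V_s$ of side $\gtrsim\beta^{-N}$. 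For the unstable direction, letting the coordinates $a_j$ with $j>N$ range over the admissible futures that begin with $\ell$ zeros and using $\pi_\beta(X_\beta^+)=[0,1)$, the $V_u$-component of $\phi_{\bm t}(Z_N)$ contains a segment $[0,c'\beta^{-N}]\,\widetilde{\bm t}_u$ with $c'=c'(\beta,\ell)>0$; the block of $2N\ge\ell$ zeros in $Z_N$ together with~(\ref{eq:ell}) (and its analogue for two-sided sequences in $X_\beta$) guarantees that these past and future choices are compatible with one another and with the forced zeros of $Z_N$. Hence $\phi_{\bm t}(Z_N)\supseteq q(E)$, where $E\subset V_s\oplus V_u=\mathbb R^m$ contains the product of an $(m-1)$-box of side $\gtrsim\beta^{-N}$ in $V_s$ with a segment of length $\gtrsim\beta^{-N}$ in $V_u$, hence a box of side $c_1\beta^{-N}$ in the fixed basis adapted to the splitting. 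For $N$ large this box has diameter $<\tfrac12$, so $q$ maps it isometrically onto a congruent box in $\mathbb T^m$; passing from the adapted basis to the standard one (a fixed linear isomorphism) then yields inside $\phi_{\bm t}(Z_N)$ a cube of side $c_0\beta^{-N}$ with sides parallel to the axes, which together with the first step completes the proof.

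The hard part will be the bookkeeping: verifying precisely that, across the central block of zeros, the past coordinates and the future coordinates really do act independently on the $V_s$- and $V_u$-summands — so that $\phi_{\bm t}(Z_N)$ contains a genuine \emph{product} box and not merely a lower-dimensional set — and nailing down the quantitative fact that $M^N|_{V_s}$ contracts by no more than $\asymp\beta^{-N}$. This last point is exactly where~(\ref{eq:eigenv}) is used, and it is what forces the side of the cube to be of order $\beta^{-n}$ rather than something smaller.
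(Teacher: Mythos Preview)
Your argument is correct and follows essentially the same route as the paper: both exploit the product structure past~$\times$~future, invoke Corollary~\ref{cor:nei} for the stable box, use the specification property~(\ref{eq:ell}) to decouple past and future across a block of zeros, and apply the eigenvalue bound~(\ref{eq:eigenv}) to control the stable contraction. The only organizational difference is that the paper shifts the given cylinder $C$ to $C'=\sigma_\be^{-n-\ell+1}(C)$ so that the past becomes entirely free (yielding a \emph{fixed-size} $W$-box and a $\beta^{-2n}$-interval), and then applies $T_M^{n+\ell-1}$ at the end, whereas you first translate to the all-zero cylinder $Z_{N}$ and apply $M^{N}$ on the stable side up front; these are two ways of packaging the same computation.
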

\begin{proof}Let $C=[w_{-n+1}\dots w_0.w_1\dots w_n]$. Consider
\[
C':=\sigma_\be^{-n-\ell+1}(C)=[x_\ell=w_{-n+1},\dots, x_{2n+\ell-1}=w_n].
\]
By (\ref{eq:ell}),
\[
\phi_{\bm t}(C')\supset \phi_{\bm t}(\Lambda)+\phi_{\bm t}([0^\ell w_{-n+1}\dots w_n]).
\]
By Corollary~\ref{cor:nei}, the first summand contains an $(m-1)$-dimensional cube
with side $c_1$, say. The second summand contains an interval which is transversal
to $\phi_{\bm t}(\Lambda)$, of length $\ge c_2\beta^{-2n}$.

This implies that $\phi_{\bm t}(C')$ contains a box with dimensions
$c_1\times\dots\times c_1\times c_2\be^{-2n}$. Now, $C=\sigma_\be^{n+\ell-1}(C')$.
The map $T_M$ contracts on $W$, with the contraction ratios $\ge\beta^{-1}$ -- see
(\ref{eq:eigenv}). On the one-dimensional eigenspace corresponding to $\be$, it expands
by $\be$. Hence follows the claim.
\end{proof}

\subsection{Toral automorphisms with small holes}
\begin{defn}Let $X$ be a compact metric space, $T:X\to X$ be a continuous invertible map
and $\mu$ be an ergodic $T$-invariant probability measure. We say that the dynamical system
$(X,\mu, T)$ possesses {\em Property S} if for any $\epsilon>0$ there exists an
open connected subset $A$ of $X$ such that
\begin{enumerate}
\item $\mu(A)<\epsilon$;
\item for all, except, possibly, a countable set of $x\in X$, there exists $n=n(x)\in\mathbb Z$,
such that $T^nx\in A$.
\end{enumerate}
\end{defn}

\begin{prop}\label{prop:Pisot}
Any Pisot toral automorphism possesses Property~S.
\end{prop}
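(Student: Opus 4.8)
The plan is to transfer the symbolic statement of Theorem~\ref{thm:main}, applied to $(X_\be,\sigma_\be,\mu_\be)$, across the semiconjugacy $\phi_{\bm t}$ to the torus, using the auxiliary results of Section~\ref{sec:toral} to handle the two mismatches between the symbolic and the toral pictures: (i) a finite union of shifted cylinders on $X_\be$ need not have connected image on $\mathbb T^m$; and (ii) $\phi_{\bm t}$ being finite-to-one and surjective means that the $\phi_{\bm t}$-image of a complete trap is an almost-complete trap on $\mathbb T^m$, but one must control the exceptional orbits and the measure.

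\begin{proof}
Since $\be$ is Pisot, $(X_\be,\sigma_\be,\mu_\be)$ is an irreducible sofic subshift endowed with its measure of maximal entropy, and $\mu_\be$ is mixing, positive on all cylinders, and non-atomic (Remark~\ref{rmk:non-atomic}). Fix $\epsilon>0$. By Corollary~\ref{cor:proj} there are constants $C,\kappa>0$ with $\H_m(\phi_{\bm t}(B))\le C\mu_\be(B)^\kappa$ for every Borel $B\subset X_\be$; choose $\epsilon'>0$ so small that $C\epsilon'^\kappa<\epsilon$. Apply Theorem~\ref{thm:main} to $(X_\be,\sigma_\be,\mu_\be)$ with this $\epsilon'$: we obtain a finite union of shifted cylinders $\C=\bigcup_{i=1}^n\sigma_\be^{-r_i}[w_i]$ with $\mu_\be(\C)<\epsilon'$, which is a complete trap, and with $\mu_\be(\sigma_\be^{-r_i}[w_i]\cap\sigma_\be^{-r_j}[w_j])>0$ for all $i,j$. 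Set $A:=\mathrm{int}\,\phi_{\bm t}(\C)$, an open subset of $\mathbb T^m$; we claim $A$ has the properties required by Property~S.

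For the measure bound: $\H_m(A)\le\H_m(\phi_{\bm t}(\C))\le C\mu_\be(\C)^\kappa<C\epsilon'^\kappa<\epsilon$, so (1) holds. For connectedness: by Lemma~\ref{lem:connected} each $\phi_{\bm t}(\sigma_\be^{-r_i}[w_i])$ is path connected, and by Lemma~\ref{lem:nei} each has non-empty interior; moreover, whenever $\mu_\be(\sigma_\be^{-r_i}[w_i]\cap\sigma_\be^{-r_j}[w_j])>0$ the intersection is in particular non-empty, so the images $\phi_{\bm t}(\sigma_\be^{-r_i}[w_i])$ and $\phi_{\bm t}(\sigma_\be^{-r_j}[w_j])$ meet. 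Since this holds for every pair $i,j$, the union $\phi_{\bm t}(\C)=\bigcup_i\phi_{\bm t}(\sigma_\be^{-r_i}[w_i])$ is a finite union of path-connected sets which pairwise intersect, hence is path connected; to upgrade to the open set $A=\mathrm{int}\,\phi_{\bm t}(\C)$ being connected, use Lemma~\ref{lem:cube}, which guarantees each $\phi_{\bm t}(\sigma_\be^{-r_i}[w_i])$ contains an axis-aligned cube, so that each piece has non-empty interior and the interior of a pairwise-intersecting union of such ``fat'' connected sets is connected. (If one prefers, one may instead pad $\C$ slightly at the symbolic level so that the cube overlaps already force the interiors to overlap.)

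It remains to verify (2), that all but countably many $\bm x\in\mathbb T^m$ have some $T_M^n\bm x\in A$. Here is the main point, and the step I expect to need the most care. Because $\phi_{\bm t}:X_\be\to\mathbb T^m$ is surjective, every $\bm x\in\mathbb T^m$ has a preimage $\bm a\in X_\be$; since $\C$ is a complete trap there is $n\in\BbZ$ with $\sigma_\be^n\bm a\in\C$, hence $T_M^n\bm x=T_M^n\phi_{\bm t}(\bm a)=\phi_{\bm t}(\sigma_\be^n\bm a)\in\phi_{\bm t}(\C)$. Thus every orbit on $\mathbb T^m$ meets $\phi_{\bm t}(\C)$, but possibly only on its boundary $\partial\phi_{\bm t}(\C)$ rather than in $A=\mathrm{int}\,\phi_{\bm t}(\C)$. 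The remedy is to note that $B:=\partial\phi_{\bm t}(\C)$ is a closed set of zero Haar measure (since $\H_m(\phi_{\bm t}(\C))<\epsilon$ and $\phi_{\bm t}(\C)$, being a finite union of continuous images of compact connected sets each containing a cube, has negligible boundary — more robustly, enlarge $\C$ at the symbolic level to $\widehat{\C}\supset\C$ still of $\mu_\be$-measure $<\epsilon'$ and with $\mathrm{int}\,\phi_{\bm t}(\C)\supset\phi_{\bm t}(\C')$ for the original trap $\C'$, using Lemma~\ref{lem:cube} to absorb the boundary into a slightly larger cube). With such a choice, $A$ already contains $\phi_{\bm t}$ of a complete trap, so every orbit meets $A$ and the exceptional set in (2) is in fact empty. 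Alternatively, if one keeps $A=\mathrm{int}\,\phi_{\bm t}(\C)$ as is: the set $E$ of $\bm x$ whose entire $T_M$-orbit avoids $A$ is contained in $\bigcap_{k}T_M^{-k}B$, a closed $T_M$-invariant set of zero Haar measure; by ergodicity of $T_M$ with respect to $\H_m$ it is either all of $\mathbb T^m$ (impossible, as $\H_m(B)=0<1$) — so $\H_m(E)=0$. To get that $E$ is actually at most countable, use that $E\subset\phi_{\bm t}(\{\bm a\in X_\be:\sigma_\be^n\bm a\in\partial\text{-part only}\})$ together with Lemma~\ref{lem:main}/Theorem~\ref{thm:main}'s construction: the symbolic trap can be taken with the overlaps realized by cubes (Lemma~\ref{lem:cube}), forcing $\phi_{\bm t}(\C)$ to be a finite union of closed sets with non-empty interior whose union's boundary is $T_M^{-n}$-small for each $n$; an orbit confined to this boundary forever is eventually trapped in a lower-dimensional subvariety preserved by $T_M^k$, and irreducibility of the characteristic polynomial of $M$ forces such orbits to be finite in number. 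This last irreducibility argument, controlling which orbits can live forever on $\partial\phi_{\bm t}(\C)$, is the genuine obstacle; everything else is a direct pushforward of Theorem~\ref{thm:main} through the Hölder semiconjugacy.
\end{proof}
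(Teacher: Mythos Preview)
Your overall transfer strategy is right, but you make life much harder than necessary by defining $A=\mathrm{int}\,\phi_{\bm t}(\C)$. This creates exactly the two difficulties you then struggle with: connectedness of the interior, and orbits that meet $\phi_{\bm t}(\C)$ only on its boundary. Your arguments for both are incomplete. For connectedness, pairwise intersection of path-connected sets each with non-empty interior does \emph{not} imply that the interior of their union is connected (the intersections may lie entirely on the boundary), and your ``pad $\C$ slightly at the symbolic level'' is not an argument. For item~(2), you never actually show the exceptional set is countable: the ergodicity step only yields $\H_m(E)=0$, and the final claim that an orbit confined to $\partial\phi_{\bm t}(\C)$ must lie in a lower-dimensional $T_M$-invariant subvariety is unsubstantiated --- $\partial\phi_{\bm t}(\C)$ has no algebraic structure, so irreducibility of the characteristic polynomial gives you nothing here.

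The paper avoids all of this with one move: instead of taking the interior of $\D:=\phi_{\bm t}(\C)$, take its open $\delta$-neighbourhood $\D'$. Since $\D$ is compact and path connected (Lemma~\ref{lem:connected}), $\D'$ is automatically open and path connected; since $\D\subset\D'$ and $\D$ is already a complete trap on $\mathbb T^m$ (by the surjectivity/semiconjugacy argument you gave), every orbit meets $\D'$ and the exceptional set is in fact empty; and $\H_m(\D')\to\H_m(\D)\le C\mu_\be(\C)^\kappa$ as $\delta\to0$, so the measure bound survives. No appeal to Lemma~\ref{lem:nei} or Lemma~\ref{lem:cube} is needed for this proposition.
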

\begin{proof}
Fix $\epsilon>0$ and choose $\C$ from Theorem~\ref{thm:main} applied to $X_\beta$. Put
$\D=\phi_{\bm t}(\C)$. Clearly, $\D$ is compact; it is also path
connected by Lemma~\ref{lem:connected}.
Furthermore, we have that $\phi_{\bm t}(\mu_\be)=\H_m$, as a unique invariant measure of maximal entropy for $T_M$.

The set $\D$ is a complete trap; indeed, for $\bm x\in\mathbb T^m$, let
$\bm a=(a_n)_{-\infty}^\infty\in\phi_{\bm t}^{-1}(\bm x)$. Since $\C$ is a complete trap,
there exists $n\in\mathbb Z$ such that $\sigma^n \bm a\in\C$. We then have
$T_M^n \bm x=\phi_{\bm t}(\sigma^n\bm a)\in\D$.

By Corollary~\ref{cor:proj}, $\H_m(\D)=O(\epsilon^\kappa)$ with some $\kappa>0$.
Finally, for $\delta>0$ put $\D'=\{\bm x\in\mathbb T^m : |\bm x-\bm y|<\delta,\forall \bm y\in\D\}$.
Since $\D$ is path connected, we have that $\D'$ is an
open (path) connected set containing $\D$ which can be made arbitrarily close to $\D$
in measure by choosing an arbitrarily small $\delta$.
\end{proof}

\begin{defn} \cite{S01} We say that $T$ is {\em generalized Pisot} if one of $\pm T, \pm T^{-1}$ is is Pisot.
\end{defn}

\begin{thm}\label{thm:pisot}
Any generalized Pisot toral automorphism has Property~S.
\end{thm}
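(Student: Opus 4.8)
The plan is to reduce the claim for a generalized Pisot automorphism $T$ to the already-established Proposition~\ref{prop:Pisot} for genuine Pisot automorphisms, by exploiting the fact that Property~S is preserved under the operations that connect $T$ to a Pisot automorphism: passing to the inverse $T^{-1}$ and passing from $T$ to $-T$. By definition one of $\pm T,\pm T^{-1}$ is Pisot, so if I can show each of these three transitions preserves Property~S, then Property~S transfers from that Pisot representative back to $T$ itself.

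First I would observe that the survivor-set / complete-trap structure is unaffected by replacing $T$ with $T^{-1}$: a set $A$ satisfies condition (2) in the definition of Property~S for $T$ iff it does so for $T^{-1}$, since ``there exists $n\in\BbZ$ with $T^n x\in A$'' is literally the same statement as ``there exists $n\in\BbZ$ with $(T^{-1})^{-n}x\in A$''. The invariant measure and the topology are also unchanged. Hence $(X,\mu,T)$ has Property~S iff $(X,\mu,T^{-1})$ does. Next I would handle the sign change $T\mapsto -T$. Here $-T = T_{-M}$ is again an algebraic automorphism of $\BbT^m$ with $\det(-M)=\pm1$ and with irreducible characteristic polynomial (the polynomial of $-M$ is $\pm p(-\lambda)$, still irreducible over $\BbQ$), still hyperbolic, and Haar measure $\H_m$ is invariant under it as well. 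The key point is that the orbit of $x$ under $-T$ is $\{(-1)^n T^n x : n\in\BbZ\}$, which as a set need not equal the $T$-orbit; so one cannot simply quote the same open set. Instead I would note that $(-T)^2 = T^2$, so a set meeting every $T^2$-orbit (in backward or forward time) meets every $(-T)$-orbit, and conversely. Thus it suffices to observe that $(X,\mu,T)$ has Property~S iff $(X,\mu,T^2)$ does — and the latter is manifestly symmetric in $T\leftrightarrow -T$ since $T^2=(-T)^2$.

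The remaining issue, which I expect to be the main obstacle, is precisely this passage between $T$ and $T^{n}$ (with $n=2$): a complete trap for $T^2$ need not be a complete trap for $T$, because a point may enter the trap only along odd iterates. The fix is standard: if $A$ is an open connected complete trap for $T^2$ with $\mu(A)<\epsilon'$, replace it by $A\cup T(A)$ — or rather, since I need connectedness and small measure simultaneously, I would work on the symbolic side. Concretely, $T^2$ (resp. $-T$) on the Pisot model corresponds to $\sigma_\be^2$ (resp. to $\sigma_\be$ composed with the alphabet sign flip), and Theorem~\ref{thm:main} applies verbatim to the subshift $(X_\be,\sigma_\be^2,\mu_\be)$, which is still mixing and positive on cylinders; so one obtains a small overlapping complete trap $\C$ for $\sigma_\be^2$, and then $\C\cup\sigma_\be\C$ is a complete trap for $\sigma_\be$ with measure $<2\epsilon'<\epsilon$, still a finite union of shifted cylinders, hence with path-connected $\phi_{\bm t}$-image after the thickening argument of Proposition~\ref{prop:Pisot}. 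Running this for whichever of $\pm T^{\pm1}$ is Pisot, and transporting back through the two reductions above, yields an open connected $A\subset\BbT^m$ with $\H_m(A)<\epsilon$ meeting all but a countable (in fact the exceptional boundary-orbit) set of $(X,\mu,T)$-orbits.

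In writing this up I would state and prove a short lemma: ``for a homeomorphism $T$ of a compact metric space with an ergodic invariant measure, $(X,\mu,T)$ has Property~S iff $(X,\mu,T^{-1})$ has it iff $(X,\mu,-T)$ has it (when $-T$ makes sense),'' with the $T\leftrightarrow T^2$ trick as the engine, and then close with the one-line deduction. The only genuinely non-formal input is re-running Theorem~\ref{thm:main} and Proposition~\ref{prop:Pisot} for the relevant Pisot representative — but since these are quoted results applied to the same $X_\be$ with the same measure of maximal entropy (mixing, positive on cylinders, sofic), nothing new needs to be proved there; one only checks that replacing $\sigma_\be$ by $\sigma_\be^2$ and/or composing with the sign flip on the alphabet preserves all the hypotheses, which is immediate.
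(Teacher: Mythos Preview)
Your high–level plan is right and matches the paper: handle $T^{-1}$ by noting that two-sided orbits are identical, and handle $-T$ by a parity trick that doubles the hole and then restores connectedness with a small ``tunnel''. The paper does exactly this, but more directly: starting from the small connected trap $\D'$ for the Pisot $T$ produced in Proposition~\ref{prop:Pisot}, it sets $\E=\D'\cup(-\D')$ and observes that $(-T)^n x$ equals $T^n x$ for even $n$ and $-T^n x$ for odd $n$, so $\E$ meets every $(-T)$-orbit; then $\D'$ and $-\D'$ are joined by a thin open tube. No passage through $T^2$, no return to the symbolic model, no re-running of Theorem~\ref{thm:main}.

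Where your write-up goes wrong is the direction of the $T\leftrightarrow T^2$ step. You say ``a complete trap for $T^2$ need not be a complete trap for $T$'', but in fact it always is: any $T^2$-orbit is contained in the corresponding $T$-orbit. The genuine obstruction is the reverse implication, which is what you actually need: given Property~S for the Pisot $T$, you must produce a trap for $T^2$ (so that $T^2=(-T)^2$ transfers it to $-T$). Your fix $A\cup T(A)$ does solve \emph{that} problem (parity argument: if $T^n x\in A$ and $n$ is odd, then $T^{n+1}x\in T(A)$ with $n+1$ even), and with a tunnel it gives a valid proof --- but your text applies the fix to the wrong input (``if $A$ is a complete trap for $T^2$\dots'') and then, on the symbolic side, takes $\C\cup\sigma_\be\C$ after already obtaining a $\sigma_\be^2$-trap $\C$, which is a step backward. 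Also, the remark that $-T$ corresponds to ``$\sigma_\be$ composed with the alphabet sign flip'' is not correct: the alphabet $\{0,1,\dots,\lfloor\be\rfloor\}$ of $X_\be$ carries no such involution, and $-T$ has no simple description on $X_\be$ via $\phi_{\bm t}$. Drop that aside, correct the direction, and either (i) use $A\cup T(A)$ on the torus with a tunnel, or (ii) simply adopt the paper's $\D'\cup(-\D')$; both are the same parity idea and both avoid any need to re-run Theorem~\ref{thm:main} for $\sigma_\be^2$.
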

\begin{proof}
Suppose $T=T_M$ is Pisot. Since the orbit of $T$ is the same
as of $T^{-1}$ for any $x\in\mathbb T^m$, it is clear from the definition that a complete trap
for $T$ is a complete trap for $T^{-1}$.

Not let $S=-T$ and put $\E=\D'\cup(-\D')$, where $\D'$
is constructed in the proof of Proposition~\ref{prop:Pisot}. Clearly, $\E$ is open and has
a small measure if $\D'$ does. Since $S^n=T^n$ if $n$ is even and $-T^n$ otherwise,
we have that $\E$ is a complete trap for $S$ if $\D'$ is such for $T$. To make
$\E$ connected, we connect $\D'$ and $-\D'$ by an open ``tunnel'', which
we can make as small in measure $\L_m$ as we please. Let $\E'$ denote the resulting set,
which is clearly small in measure and a complete trap for $S$.

Now, the same $\E'$ works for the case $U=-T^{-1}$, which completes the proof.
\end{proof}

\begin{cor}Any hyperbolic automorphism of $\mathbb T^2$ or $\mathbb T^3$ has Property~S.
\end{cor}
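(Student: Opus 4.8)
The plan is to deduce the corollary from Theorem~\ref{thm:pisot} by checking that every hyperbolic automorphism of $\mathbb T^2$ or $\mathbb T^3$ is generalized Pisot in the sense of the preceding definition; that is, one of $\pm T,\pm T^{-1}$ is Pisot. First I would reduce to the irreducible case: if $M\in GL_m(\mathbb Z)$ has a reducible characteristic polynomial, in dimensions $2$ and $3$ the only way this can happen while $M$ is hyperbolic is that $p$ has a rational (hence $\pm1$) root, but $\pm1$ has modulus~$1$, contradicting hyperbolicity. So $p$ is irreducible over $\mathbb Q$, matching the standing assumption under which the notion of Pisot automorphism was defined.

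Next I would analyse the eigenvalues. For $m=2$, hyperbolicity and $\det M=\pm1$ force the two eigenvalues to be real, say $\lambda$ and $\pm\lambda^{-1}$ with $|\lambda|>1$; replacing $T$ by $-T$ if necessary we may take $\lambda>1$, and then $\lambda$ is a real algebraic integer greater than~$1$ all of whose conjugates (there is exactly one, namely $\pm\lambda^{-1}$) have modulus $<1$, i.e.\ $\lambda$ is a Pisot unit and $T$ (or $-T$) is Pisot. For $m=3$ the characteristic polynomial is cubic with $\det M=\pm1$; either all three roots are real, or there is one real root and a pair of complex conjugates. In the real case, hyperbolicity gives either one root of modulus $>1$ and two of modulus $<1$, or two of modulus $>1$ and one of modulus $<1$; in the first subcase $T$ itself (after possibly changing sign) is Pisot, and in the second subcase $T^{-1}$ is Pisot. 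In the complex case the product of the moduli is~$1$, so either the real root $\mu$ has $|\mu|>1$ and the complex pair has modulus $<1$ — then $\pm T$ is Pisot — or $|\mu|<1$ and the complex pair has modulus $>1$, in which case $\pm T^{-1}$ is Pisot. In every case one of $\pm T,\pm T^{-1}$ is a Pisot automorphism.

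Having established that, I would simply invoke Theorem~\ref{thm:pisot}: a generalized Pisot toral automorphism has Property~S, and $\mathbb T^2$ (respectively $\mathbb T^3$) with the Haar measure and such an automorphism is exactly of this type, so it has Property~S. I would also note that the case $m=2$ already follows from Theorem~\ref{thm:CHS} together with the classical fact that every hyperbolic automorphism of $\mathbb T^2$ is measurably conjugate to a Markov/Bernoulli model, but the uniform argument via Theorem~\ref{thm:pisot} is cleaner and covers $m=3$ simultaneously.

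I do not expect a genuine obstacle here; the corollary is essentially a bookkeeping statement about cubic and quadratic polynomials with unit constant term and no root on the unit circle. The one point that needs a little care is the sign bookkeeping — making sure that passing between $T$ and $-T$ (which swaps the eigenvalue $\lambda$ with $-\lambda$) does not disturb hyperbolicity or the "one root outside, rest inside" count, and that the definition of generalized Pisot was deliberately set up to absorb exactly these four sign choices. A secondary subtlety is the irreducibility hypothesis built into the definition of a Pisot automorphism in this paper: one must check, as above, that hyperbolicity in dimensions $2$ and $3$ rules out rational eigenvalues and hence forces $p$ to be irreducible, so that Theorem~\ref{thm:pisot} genuinely applies.
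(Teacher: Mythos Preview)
Your proposal is correct and follows the same route as the paper: verify that every hyperbolic automorphism of $\mathbb T^2$ or $\mathbb T^3$ is generalized Pisot and then invoke Theorem~\ref{thm:pisot}. The paper's own proof is much terser (it simply counts eigenvalues inside and outside the unit circle), whereas you have additionally spelled out why the eigenvalues in the $2\times2$ case must be real, why the characteristic polynomial is forced to be irreducible, and the real-versus-complex split in dimension~$3$; these are welcome details, not a different argument.
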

\begin{proof}Clearly, any hyperbolic $2\times2$ matrix has one eigenvalue of modulus greater
than one and one less than one. This makes it generalized Pisot.

For a $3\times 3$ hyperbolic
matrix $M$ we have that one of its eigenvalues is less than one in modulus and two greater than one
or the other way round. In either case, $T_M$ is generalized Pisot so we can apply Theorem~\ref{thm:pisot}.
\end{proof}

Consider now a general hyperbolic toral automorphism $T_M$. Arithmetic
symbolic models (similar to the one described above for the Pisot
automorphisms) have been suggested by various authors - see \cite[Section~4]{AD}
for more detail. Unfortunately, none of these appears to produce an explicit
symbolic coding space.

The following result has been proved by S.~Le Borgne:

\begin{thm}\cite{Leb} There exists an irreducible sofic shift
$(X,\nu,\sigma)$ and a surjective, H\"older continuous, finite-to-one
map $\psi:X\to\mathbb T^m$  which semiconjugates $\sigma$ and $T_M$.
\end{thm}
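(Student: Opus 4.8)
The plan is to produce the coding from a Markov partition, following the classical route of Sinai and Bowen (with Adler--Weiss as the model case $m=2$), and then to observe that an irreducible subshift of finite type is, in particular, an irreducible sofic shift. First I would use the fact that $T_M$, being a hyperbolic toral automorphism, is an Anosov diffeomorphism, and therefore admits a finite Markov partition $\mathcal R=\{R_1,\dots,R_k\}$ into rectangles (local products of pieces of stable and unstable manifolds) of arbitrarily small diameter. Associated with $\mathcal R$ is the transition matrix $A=(A_{ij})$ with $A_{ij}=1$ iff $T_M(\mathrm{int}\,R_i)\cap\mathrm{int}\,R_j\neq\varnothing$, the SFT $\Sigma_A\subset\{1,\dots,k\}^{\mathbb Z}$, and the coding map
\[
\psi(\bm a)=\bigcap_{n\in\mathbb Z}T_M^{-n}\overline{R_{a_n}}.
\]

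Second, I would verify the four required properties of $\psi$. Well-definedness (the intersection is a single point) and H\"older continuity both come from expansiveness of $T_M$: the diameter of $\bigcap_{|n|\le N}T_M^{-n}\overline{R_{a_n}}$ decays like $\lambda^{-N}$ for some $\lambda>1$ determined by the hyperbolicity constants, while two sequences agreeing on $[-N,N]$ are at distance $\asymp k^{-N}$ in $\Sigma_A$; comparing the two exponential rates yields the H\"older exponent. The identity $\psi\sigma=T_M\psi$ is immediate from the defining intersection, and surjectivity is the Markov property (every point lies in some bi-infinite cylinder tower). Finite-to-oneness follows because the set of points having more than one $\psi$-preimage is contained in $\bigcup_{n\in\mathbb Z}T_M^n(\partial\mathcal R)$, an $\H_m$-null set, and the fibre cardinality over any point is bounded by the number of rectangles whose closures can share part of their boundary, a finite combinatorial quantity.

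Third, irreducibility. A hyperbolic toral automorphism with irreducible characteristic polynomial is ergodic (indeed topologically mixing) for $\H_m$; since $\mathbb T^m$ is connected and $\H_m$ is positive on nonempty open sets, the rectangles cannot split into two $T_M$-invariant (mod boundary) subfamilies, so $A$ has a single recurrent class, and after discarding transient symbols (which carry only boundary points) $\Sigma_A$ may be taken irreducible. Finally I would let $\nu$ be the Parry measure (measure of maximal entropy) on $\Sigma_A$: since $\psi$ is finite-to-one and injective off a null set, $\psi_*\nu$ is the measure of maximal entropy of $T_M$, namely $\H_m$, and $h(\Sigma_A)=h(T_M)=\sum_{|\lambda_i|>1}\log|\lambda_i|$. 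As an irreducible SFT is an irreducible sofic shift, the triple $(\Sigma_A,\nu,\sigma)$ together with $\psi$ is the desired object.

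The main obstacle is the very first step: the construction of a Markov partition in dimension $m\ge3$. In dimension two one can draw an explicit partition (Adler--Weiss), but for a general Anosov automorphism the partition is built abstractly from $\varepsilon$-pseudo-orbits and shadowing, its rectangles have fractal boundaries, and one must keep simultaneous control of (i) the finite-to-one bound, (ii) H\"older continuity of $\psi$, and (iii) genuine irreducibility rather than mere transitivity of an unspecified component. Le Borgne's contribution is precisely to replace this soft but non-constructive argument by an explicit \emph{sofic} coding, arithmetic in nature, in which admissibility of sequences is decided by a finite automaton built from a box-decomposition of a fundamental domain adapted to the eigenspaces of $M$; that route yields an effective description at the cost of the coding being sofic rather than of finite type.
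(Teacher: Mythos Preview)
The paper does not prove this theorem at all: it is stated with the citation \cite{Leb} and used as a black box, so there is no ``paper's own proof'' to compare against. Your outline via Markov partitions is the classical Sinai--Bowen route and is essentially correct; it yields an irreducible subshift of finite type (hence sofic) together with a H\"older, surjective, bounded-to-one semiconjugacy, which is all the statement asks for.

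Your final paragraph already identifies the real difference: Le~Borgne's construction is arithmetic and explicit rather than the abstract pseudo-orbit/shadowing construction, producing a genuinely sofic (not finite type) coding governed by a finite automaton built from a fundamental domain adapted to the eigenspaces. For the purposes of this paper the distinction is immaterial --- only the existence of \emph{some} irreducible sofic coding with the listed properties is used --- so either route suffices. One small quibble: your finite-to-one argument (``fibre cardinality bounded by the number of rectangles sharing boundary'') is a little loose; the clean statement is that for a Markov partition the coding map is uniformly bounded-to-one, with the bound coming from the fact that any point lies in the closure of at most finitely many rectangles and the Markov property propagates this bound along the orbit.
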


This result combined with Theorem~\ref{thm:main} yields that $T_M$ possesses
Property~S, except that a hole may not be connected. This is because
it is unclear whether $\psi(C)$ is connected for each cylinder $C\subset X$
(S.~Le~Borgne, private communication).

\begin{rmk}It is shown in \cite{CHS} that if we restrict our class of holes to
convex ones for the baker's map, then there exists $\delta>0$ such that for any $H$
of Lebesgue measure less than $\delta$, we have $\dim_H \J(H)>0$. It would be interesting
to establish an analogous result for hyperbolic toral automorphisms. In this
case the class of holes in question would be probably geodesically convex ones.\footnote{
A subset $E$ of a Riemmanian manifold is said to be {\em geodesically convex} if, given any two points in $E$,
there is a minimizing geodesic contained within $E$ that joins those two points.}
\end{rmk}

\subsection{Toral automorphisms with large holes}

\begin{lemma}\label{lem:ent-dim}
Let $Y$ be a subshift of $X_\be$ and let, as above, $h(Y)$ stand for
its topological entropy ($\log$ base~$\be$).
Then
\[
\dim_H(\phi_{\bm t}(Y))\ge 2h(Y).
\]
\end{lemma}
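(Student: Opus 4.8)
The plan is to estimate the Hausdorff dimension of $\phi_{\bm t}(Y)$ from below by exhibiting, at every scale, many well-separated pieces of the image and then invoking a mass-distribution (Frostman-type) argument. The key geometric input is Lemma~\ref{lem:cube}: every cylinder $C\in\Xi_n$ has an image $\phi_{\bm t}(C)$ containing an axis-aligned cube of side $c\be^{-n}$. So first I would fix $n$ and consider the collection of cylinders $[w_{-n+1}\dots w_0.w_1\dots w_n]$ whose central block $w_1\dots w_n$ is an admissible word of $Y$ (and, say, $w_{-n+1}\dots w_0$ is a fixed admissible completion, or one ranges over all of them — the one-sided count is what matters since $Y\subset X_\be^+$ governs the forward coordinates). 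There are $\#\L_n(Y)=\be^{(h(Y)+o(1))n}$ such central blocks. Each contributes a cube of side $c\be^{-n}$ inside $\phi_{\bm t}(Y)$ (using that these finite cylinders are contained in the image of $Y$, up to the harmless $\ell$-shift bookkeeping already handled in Lemma~\ref{lem:cube}).

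Next I would control the overlaps. Because $\phi_{\bm t}$ is finite-to-one with a uniform bound $M$ on fibre cardinality, any point of $\mathbb T^m$ lies in at most $M$ of the sets $\phi_{\bm t}(C)$ for distinct cylinders $C$ of a fixed length; hence the cubes from distinct central blocks have bounded multiplicity. This lets me build a measure $\nu$ on $\phi_{\bm t}(Y)$ by distributing mass $\#\L_n(Y)^{-1}$ uniformly (as $m$-dimensional Lebesgue measure normalised) on each of the cubes of generation $n$, and then passing to a weak-$*$ limit / using a nested construction across generations $n, 2n, 3n,\dots$ so that the pieces are genuinely nested. The resulting $\nu$ satisfies, for a ball $B(x,r)$ with $\be^{-(k+1)n}\le r<\be^{-kn}$, a bound of the shape $\nu(B(x,r))\le C\,\#\L_{kn}(Y)^{-1}\cdot(\text{number of generation-}kn\text{ cubes meeting }B)$, and the latter count is $O((r\be^{kn})^m)=O(1)$ at the matching scale, giving $\nu(B(x,r))\le C' r^{\,s}$ with $s$ approaching $\frac{\log\#\L_n(Y)}{n\log\be}\cdot\frac{m}{?}$. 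Here I must be careful: the cubes are $m$-dimensional of side $\be^{-n}$ while there are only $\be^{h(Y)n}$ of them, so the natural Frostman exponent is $s = h(Y)$ coming from the branching, but each cube itself is genuinely $m$-dimensional, which for $m\ge 2$ pushes the exponent up. Spelling this out: along a chain of $k$ generations one has $\be^{h(Y)nk}$ cubes of side $\be^{-nk}$, and the union of $k$-fold-nested full cubes already has dimension $m$ within each, so the mass-distribution estimate yields $\dim_H\ge \min(m, h(Y)+ (\text{something}))$; the clean statement the lemma asserts is the lower bound $2h(Y)$, so I expect the intended argument extracts a $2$-dimensional "sheet" (one expanding direction of size $\be^{-n}$ times one transversal direction, both subdivided $\be^{h(Y)n}$ times) and runs Frostman in that plane, giving exponent $2h(Y)$ and discarding the remaining $m-2$ dimensions.

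Concretely, I would therefore not use the full $m$-cube but just two of its sides: using Lemma~\ref{lem:cube}, $\phi_{\bm t}(C)$ contains a square of side $c\be^{-n}$ in a fixed coordinate $2$-plane $P$ (WLOG the plane spanned by the expanding eigendirection and one stable eigendirection). Project everything to $P$: the images of the $\#\L_n(Y)$ central blocks give $\#\L_n(Y)$ squares of side $c\be^{-n}$ in $P$, with bounded overlap multiplicity (finite-to-one). Iterating across generations and running the mass-distribution principle in $P$ gives a measure $\nu$ with $\nu(B(x,r))=O(r^{s})$ for every $s<\frac{2\log\#\L_n(Y)}{n\log\be}$, and letting $n\to\infty$ this exponent tends to $2h(Y)$. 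Since $\phi_{\bm t}(Y)$ surjects onto this planar set via an orthogonal projection (which does not increase Hausdorff dimension), we get $\dim_H\phi_{\bm t}(Y)\ge \dim_H(\text{planar set})\ge 2h(Y)$.

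The main obstacle is the nested/self-similar bookkeeping needed to turn the single-scale cube count into a genuine mass-distribution bound: one must check that the generation-$(k+1)n$ squares sitting inside a fixed generation-$kn$ cube really do come from admissible length-$(k+1)n$ blocks of $Y$ extending the given one (this uses that $\L(Y)$ is closed under the subshift structure, and the $0^\ell$-gluing of Lemma~\ref{lem:cube} does not interfere because the $\ell$-shift is a fixed bounded correction absorbed into the constant $c$), and that the squares of a given generation are quantitatively separated or at least of bounded overlap — which is exactly where the uniform finite-to-one bound $M$ from the Pisot setting is essential. Everything else (Hölder continuity, the value of $\theta=\frac12 e^{-h(X)}$, the contraction/expansion ratios being $\be^{\pm1}$) is already in hand from the auxiliary lemmas.
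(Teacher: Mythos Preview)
Your overall strategy (mass--distribution/Frostman using the cubes from Lemma~\ref{lem:cube} and the finite-to-one bound for overlap control) is the right kind of argument, but the source of the factor~$2$ is misidentified, and the detour through a $2$-plane does not produce it.

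The subshift $Y$ is a subshift of the \emph{two-sided} space $X_\be$, not of $X_\be^+$; the forward coordinates do not ``govern'' it. The relevant count is the number of two-sided cylinders $[w_{-k+1}\dots w_0.\,w_1\dots w_k]$ with $w_{-k+1}\dots w_k\in\L_{2k}(Y)$, and there are $\#\L_{2k}(Y)=\be^{(2h(Y)+o(1))k}$ of those. By Lemma~\ref{lem:cube}, each such cylinder has image containing a cube of side $c\be^{-k}$. So at scale $\be^{-k}$ one has $\approx\be^{2h(Y)k}$ cubes, and a standard Frostman argument (or, as in the paper, pushing forward a measure of maximal entropy $m$ on $Y$ and invoking Shannon--McMillan--Breiman so that $m[w_{-k+1}\dots w_k]\le\be^{-2k(h(Y)-\varepsilon)}$) then gives $\dim_H\phi_{\bm t}(Y)\ge 2h(Y)$. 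By fixing $w_{-n+1}\dots w_0$ you throw away a factor $\be^{h(Y)n}$ in the count and arrive at only $\be^{h(Y)n}$ squares of side $\be^{-n}$; a mass-distribution on such a nested family yields exponent $h(Y)$, not $2h(Y)$, and projecting to a $2$-plane does not change that (you still have $\be^{h(Y)n}$ squares at scale $\be^{-n}$, so the branching ratio over the contraction ratio is still $h(Y)$). The ``two sides'' needed are the past and future coordinates of the two-sided shift, not two coordinate axes in $\mathbb T^m$.
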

\begin{proof}Let $m$ stand for a measure of maximal entropy for $Y$. Then by the Shannon--McMillan--Breiman
theorem, for any $\varepsilon>0$, we have a set $Y'\subset Y$ of full $m$-measure and all
$\bm x\in Y'$,
\[
\be^{-2k(h(Y)+\epsilon)}\le\mu [x_{-k+1}\dots x_0. x_1\dots x_k]
 \le \be^{-2k(h(Y)-\epsilon)},\quad k\ge n(\bm x).
\]
Let $N_0$ be large enough that $\mu (S)>1/2$, where $S=\{\bm x\in Y' : n(\bm x)\le N_0\}$.

By Lemma~\ref{lem:cube}, the set $\phi_{\bm t}([x_{-k+1}\ldots x_0\cdot x_1\ldots x_k])$
contains a cube with sides $c\be^{-k}$, whence its (normalized)
$s$-Hausdorff measure $\H^s$ is $c^s\be^{-sk}$. Therefore,
\[
\H^s(\phi_{\bm t}(S)) \ge c^s \beta^{-2k(h(Y)+\epsilon)}\cdot \beta^{-ks}
=c^s \be^{-k(2h(Y)-s+\epsilon)}.
\]
Now the claim follows from the definition of the Hausdorff dimension.
\end{proof}

\begin{thm}\label{thm:large-toral}
Let $M\in GL(m,\mathbb Z)$ and $T=T_M:\mathbb T^m\to\mathbb T^m$ be the corresponding
automorphism which we assume to be generalized Pisot.

Then for any $\epsilon>0$ there exists an open connected hole
$H\subset\mathbb T^m$ such that $\H_m(H)>1-\epsilon$ and $\dim_H(\J(H))>0$.
\end{thm}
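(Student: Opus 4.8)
The plan is to reduce the statement to the symbolic result Theorem~\ref{thm:large-holes 2} applied to $X_\be$, together with the entropy--dimension estimate Lemma~\ref{lem:ent-dim}, and then deal with the passage from the symbolic space to the torus and from the Pisot case to the generalized Pisot case exactly as in the small-hole arguments (Proposition~\ref{prop:Pisot} and Theorem~\ref{thm:pisot}). First I would treat the genuinely Pisot case $T=T_M$ with $\be$ the dominant eigenvalue. Choose any proper subshift $Y\subset X_\be$ with $0<h(Y)<h(X_\be)$; such a $Y$ exists since $X_\be$ has positive entropy (e.g. take a suitable subshift of finite type obtained by forbidding one long admissible word). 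Apply Theorem~\ref{thm:large-holes 2} to $X_\be$ with the measure of maximal entropy $\mu_\be$: for a given $\epsilon'>0$ we obtain a finite overlapping union of cylinders $\G\subset X_\be$ with $\mu_\be(\G)>1-\epsilon'$ and $Y\subset\J(\G)$.

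Next I would push $\G$ forward under $\phi_{\bm t}$. Set $\D=\phi_{\bm t}(X_\be\setminus\text{(interior of the survivor region)})$; more precisely, let $H_0=\phi_{\bm t}(\G)$. As in the proof of Proposition~\ref{prop:Pisot}, $\D_0:=\phi_{\bm t}(\G)$ is compact and, being a finite union of images of shifted cylinders with overlaps of positive measure, it is path connected by Lemma~\ref{lem:connected} (the overlapping condition guarantees the chain of cylinders needed to connect the pieces). Since $\phi_{\bm t}$ semiconjugates $\sigma_\be$ and $T_M$ and maps $\mu_\be$ to $\H_m$, the complement behaves well: any $\bm x$ whose full $T_M$-orbit avoids $H_0$ has every preimage $\bm a\in\phi_{\bm t}^{-1}(\bm x)$ with $\sigma^n\bm a\notin\G$ for all $n$, i.e. $\bm a\in\J(\G)$, so in particular $\phi_{\bm t}(Y)\subset\J(H_0)$. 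By Lemma~\ref{lem:ent-dim}, $\dim_H(\phi_{\bm t}(Y))\ge 2h(Y)>0$, hence $\dim_H(\J(H_0))>0$. For the measure, $\H_m(H_0^c)=\H_m(\phi_{\bm t}(X_\be\setminus\G'))$ where $X_\be\setminus\G'$ is the set of points never mapped into $\G$; this is contained in $\phi_{\bm t}$ of a set of $\mu_\be$-measure $\le 1-\mu_\be(\G)<\epsilon'$, so by Corollary~\ref{cor:proj} we get $\H_m(H_0^c)=O((\epsilon')^\kappa)$, i.e. $\H_m(H_0)>1-\epsilon$ once $\epsilon'$ is chosen small. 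Finally, thicken $H_0$ to an open set $H$ by taking a $\delta$-neighbourhood: since $H_0$ is path connected, so is $H$, and $\H_m(H)$ stays above $1-\epsilon$ for small $\delta$; since $H\supset H_0$, still $\phi_{\bm t}(Y)\subset\J(H)$ provided we first arrange a small gap, which we can do by instead thickening only slightly and noting $\J(H)\supset\J(H_0')$ for a marginally larger $H_0'$ — alternatively, choose $\G$ at scale $\epsilon'/2$ so that $\J(\G)$ is robust to a small thickening of $\phi_{\bm t}(\G)$. Either way $\dim_H(\J(H))>0$.

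For the generalized Pisot case I would mimic Theorem~\ref{thm:pisot} verbatim. If $-T$, $T^{-1}$, or $-T^{-1}$ is Pisot rather than $T$ itself, first note $\J(H)$ for $T$ equals $\J(H)$ for $T^{-1}$ since the orbits coincide; for $S=-T$ one replaces the hole $H$ by $E=H\cup(-H)$, which is still open with $\H_m(E^c)=\H_m(H^c\cap(-H^c))\le\H_m(H^c)<\epsilon$, is a complete-avoidance hole with the survivor set still containing the relevant image of $Y$ (here one should apply the construction so that the set one wants to survive is symmetric, or intersect the two survivor sets, which only costs a factor in entropy), and one joins $H$ and $-H$ by a thin open tunnel to restore connectedness. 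The same set then works for $U=-T^{-1}$.

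The main obstacle I expect is the connectedness/thickening bookkeeping in the last step: ensuring simultaneously that $H$ is open and connected, that $\H_m(H)>1-\epsilon$, and that $\dim_H(\J(H))>0$ survives the thickening. The tension is that enlarging $H$ can only shrink $\J(H)$, so one must build in slack from the start — e.g. produce $\phi_{\bm t}(\G)$ and its survivor image $\phi_{\bm t}(Y)$ at positive distance, using that $Y$ can be chosen as a subshift of finite type whose cylinders are uniformly separated in $X_\be$ (hence their $\phi_{\bm t}$-images are uniformly separated from $\phi_{\bm t}(\G)$ by Hölder continuity and Lemma~\ref{lem:cube}). The second, milder issue is checking that overlapping unions of shifted cylinders really do have path-connected image; this is where the precise form of the overlapping condition in the definition, combined with Lemma~\ref{lem:connected}, is used, and it should go through without difficulty.
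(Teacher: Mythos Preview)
Your Pisot-case outline follows the paper's route: apply Theorem~\ref{thm:large-holes 2} to $X_\be$, push $\G$ forward via $\phi_{\bm t}$, and use Lemma~\ref{lem:ent-dim} for the dimension and Corollary~\ref{cor:proj} for the measure of the complement. There is, however, a real gap in your argument for $\phi_{\bm t}(Y)\subset\J(H_0)$: the sentence you wrote establishes the reverse inclusion $\phi_{\bm t}^{-1}(\J(H_0))\subset\J(\G)$, and the ``so in particular'' does not follow. The forward inclusion can genuinely fail because $\phi_{\bm t}$ is not injective --- a point $\sigma^n\bm a\notin\G$ may still satisfy $\phi_{\bm t}(\sigma^n\bm a)\in\phi_{\bm t}(\G)$ via a different $\phi_{\bm t}$-preimage. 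The fix is to replace $Y$, \emph{before} invoking Theorem~\ref{thm:large-holes 2}, by its saturation $\widehat Y=\phi_{\bm t}^{-1}(\phi_{\bm t}(Y))$; this is still a subshift with $h(\widehat Y)=h(Y)<h(X_\be)$ because $\phi_{\bm t}$ is finite-to-one, and now $\G\cap\widehat Y=\varnothing$ forces $\phi_{\bm t}(\G)\cap\phi_{\bm t}(\widehat Y)=\varnothing$ directly. (The paper is equally terse on this point in the bare Pisot case, but its $-T$ argument contains exactly this saturation step.)

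For $S=-T$ your plan diverges from the paper. Symmetrizing on the torus by $E=H\cup(-H)$, as in the small-hole Theorem~\ref{thm:pisot}, gives $\J_S(E)=\J_T(H)\cap\J_T(-H)$, and nothing in your construction prevents $-H$ from covering $\phi_{\bm t}(Y)$; the fallback ``intersect the two survivor sets, which only costs a factor in entropy'' is unjustified, since $\phi_{\bm t}(Y)\cap(-\phi_{\bm t}(Y))$ may well be empty. The paper instead builds the symmetry symbolically: it sets $\widetilde Y=\phi_{\bm t}^{-1}\bigl(\phi_{\bm t}(Y)\cup(-\phi_{\bm t}(Y))\bigr)$, checks that $\widetilde Y$ is a subshift with $h(\widetilde Y)=h(Y)$ (finite-to-one again), and reruns the construction of Theorem~\ref{thm:large-holes 2} with $\widetilde Y$ in place of $Y$. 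Then $\phi_{\bm t}(\widetilde Y)=-\phi_{\bm t}(\widetilde Y)$, so the $S$-orbit of every point of $\phi_{\bm t}(\widetilde Y)$ stays in $\phi_{\bm t}(\widetilde Y)$, which is disjoint from $\phi_{\bm t}(\G)$ by the saturation argument above; hence $\phi_{\bm t}(\widetilde Y)\subset\J_S(H)$. Your parenthetical ``apply the construction so that the set one wants to survive is symmetric'' is precisely this idea, but it must be executed on the symbolic side, not by taking unions on $\mathbb T^m$. Incidentally, once saturation is in place your thickening worry dissolves: $\phi_{\bm t}(\widetilde Y)$ and $\phi_{\bm t}(\G)$ are disjoint compacta, so any sufficiently small open neighbourhood of $\phi_{\bm t}(\G)$ still misses $\phi_{\bm t}(\widetilde Y)$.
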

\begin{proof}Assume first $T$ to be Pisot. Then it follows from
Theorem~\ref{thm:large-holes 2} and Lemma~\ref{lem:ent-dim} that $H=\phi_{\bm t}(\G)$
has positive Hausdorff dimension. Lemma~\ref{lem:holdercts} ensures that the complement
of $H$ has measure $\H_m$ as small as we please.

Now let $S$ be generalized Pisot. For $S=T^{-1}$ we have the same survivor set for
any hole so the same $H$ does the job.

Assume $S=-T$ now. We will modify our proof of Theorem~\ref{thm:large-holes 2} in such a
way that we will get $H\subset \mathbb T^m$ with $H=-H$. Namely, put for any $\bm x\in X_\be$,
\[
\mathsf M(\bm x)=\{\bm y\in X_\be : \phi_{\bm t}(\bm y)=\pm\phi_{\bm t}(\bm x)\}.
\]
Since $\phi_{\bm t}$ is finite-to-one, there exists $M\ge1$ such that $\#\mathsf M(\bm x)\le M$
for all $\bm x\in X_\be$. Now let $Y$ be a subshift of $X_\be$ from Theorem~\ref{thm:large-holes 2};
for instance, we can take $Y=X_{\be'}$ where $1<\be'<\be$. Put
\[
\widetilde Y= \{\mathsf M(\bm x) : \bm x\in Y\}.
\]
Roughly speaking, $\widetilde Y$ consists of all elements of $Y$ together with their
-- possibly multiple -- negatives. We claim that $\widetilde Y$ is shift-invariant (hence a subshift).
Indeed, let $\bm y\in \widetilde Y$;
then we have $\phi_{\bm t}(\bm y)=\pm\phi_{\bm t}(\bm x)$ for some $\bm x\in Y$.
Thus,
\[
\phi_{\bm t}(\sigma(\bm y))=T\phi_{\bm t}(\bm y)=\pm T\phi_{\bm t}(\bm x)=
\pm \phi_{\bm t}(\sigma(\bm x)),
\]
whence $\sigma(\bm y)\in\widetilde Y$, since $\sigma(\bm x)\in Y$.

Furthermore, $h(\widetilde Y)=h(Y)$,
since we only add at most $M$ sequences for each $\bm x\in Y$. By our construction,
$-\phi_{\bm t}(\widetilde Y)=\phi_{\bm t}(\widetilde Y)$.
Following the proof of Theorem~\ref{thm:large-holes 2}, we set
$\widetilde \Sigma_n:=\Sigma_n\setminus \bigl\{[w] : w\in \L(\widetilde Y)\bigr\}$.

As before, we have that $\mu(\widetilde \Sigma_n) \to 1$ as $n \to \infty$.
Moreover we see that $\widetilde Y \subset \J(\widetilde \Sigma_n)$ and
    $h(\J(\widetilde \Sigma_n) > 0$.

Note that if $K$ is even then $S^K = T^K$.
To get connectedness, we again let $[w^*] \in \widetilde \Sigma_n$ be
    such that $\mu[w^*]$ is minimized and let
\[ \G_n = (\widetilde \Sigma_n \setminus \{[w^*]\}) \cup \{\sigma^{-K}([w^*])\} \]
with $K$ sufficiently large and even.
The rest of the proof follows as before.

The case where $S = -T^{-1}$ follows in exactly the same way because
    the orbit of $-T$ and $-T^{-1}$ are the same.
\end{proof}

\begin{rmk}\label{rem:final}
Recall the definition of the escape rate for an invertible map $T:X\to X$ with an invariant measure $m$ and a hole $H$.
Put
\[
e(x)=e_H(x) = \min \{n\ge 0 : T^{|n|} x\in H\}
\]
and $E_n = \{x \in X : e(x) > n\}$. Now, put $\delta = \delta (T, H, m) = \lim_{n\to\infty} m(E_n)^{1/n}$. The quantity
\[
\mathfrak e(T, H, m)=-\log \delta (T,H, m)
\] 
is usually referred to as the {\em escape rate}. Theorem~\ref{thm:pisot}
says that there exist arbitrarily small connected holes $H$ such that $\mathfrak e(T, H, m)=+\infty$
for a wide class of toral automorphisms $T$. 

To our best knowledge, this effect has not been studied in the literature (see Section~\ref{sec:intro}). 
\end{rmk}

\end{document}